\documentclass[12pt]{article}
\usepackage{amssymb,amsmath,enumerate, tikz, hyperref, amsthm, amsfonts,comment}
\usepackage[top=25mm, bottom=25mm, left=28mm, right=28mm]{geometry}
\newfont{\msbm}{msbm10 scaled\magstephalf}

\newtheorem{theorem}{Theorem}[section]

\newtheorem{proposition}[theorem]{Proposition}

\newtheorem{lemma}[theorem]{Lemma}

\newtheorem{claim}[theorem]{Claim}

\newtheorem{observation}[theorem]{Observation}

\newtheorem{conjecture}[theorem]{Conjecture}

\newcommand{\ex}{\mathrm{ex}}

\newcommand{\FF}{\mbox{\msbm{F}}}

\newcommand{\PP}{\mbox{\msbm P}}
\newcommand{\EE}{\mbox{\msbm E}}

\newtheorem{prob}{Problem}

\title{On the generalized Tur\'an number of complete bipartite graphs}
\author{Oliver Janzer\footnote{Institute of Mathematics, EPFL,  Lausanne, Switzerland, {\tt oliver.janzer@epfl.ch}} \and Sean Longbrake\footnote{Dept. of Mathematics, Emory University,  Atlanta, GA 30322, USA, {\tt sean.longbrake@emory.edu}} \and Liana Yepremyan\footnote{Dept. of Mathematics, Emory University,  Atlanta, GA 30322, USA, {\tt lyeprem@emory.edu}.  Research is supported by the National Science Foundation grant 2247013: Forbidden and Colored Subgraphs.}}
\date{}

\begin{document}

\maketitle

\begin{abstract}
    For graphs $F$ and $H$, the generalized Tur\'an number $\mathrm{ex}(n,F,H)$ denotes the maximum number of copies of $F$ in an $H$-free graph on $n$ vertices. We prove that if $s\in \{2,3\}$, $s< a\leq b$ and $t$ is sufficiently large, then $\mathrm{ex}(n,K_{a,b},K_{s,t})=\Theta(n^s)$. The $s=2$, $a=b=3$ case of this result answers a question of Spiro.

    Proving another conjecture of Spiro, we show that for every graph $F$ with at least one edge, there exist infinitely many real numbers $r$ such that $\mathrm{ex}(n,F,H)=\Theta(n^r)$ holds for some graph~$H$.
\end{abstract}

\section{Introduction}

The Tur\'an problem is one of the central topics in Graph Theory. It is concerned with estimating, for a graph $H$ and positive integer $n$, the maximum number of edges that an $n$-vertex, $H$-free graph can have. A natural generalization of this problem was introduced by Alon and Shikhelman \cite{alon2016many}. For graphs $F$ and $H$ and a positive integer $n$, they defined $\ex(n,F,H)$ to be the maximum possible number of copies of $F$ in an \emph{$H$-free} graph on $n$ vertices, that is, in an $n$-vertex graph which does not contain $H$ as a subgraph. This function became known as the \emph{generalized Tur\'an number} of $F$ and $H$. The name reflects the fact that for $F=K_2$, we have that $\ex(n,F,H)$ is just the usual Tur\'an number $\ex(n,H)$. Although the systematic study of generalized Tur\'an numbers was only initiated recently, many special cases had already been studied for decades. The first such result (in which $F$ is different from $K_2$) was obtained in 1949 by Zykov \cite{Zyk49}, who determined $\ex(n,K_t,K_r)$ for all $t<r$. The same result was proven later but independently by Erd\H os \cite{Erd62} and Roman~\cite{Rom76}.

While the generalized Tur\'an number of complete graphs has been understood for over 70 years, the corresponding problem for complete bipartite graphs has remained wide open. Given positive integers $a\leq b$ and $s\leq t$, we are interested in the growth of the function $\ex(n,K_{a,b},K_{s,t})$. We will focus on the case $s\geq 2$, as $s=1$ is much simpler. Clearly, if $K_{a,b}$ contains $K_{s,t}$ as a subgraph, then this function is identically zero. Hence, the question is only interesting if $a<s$ or $b<t$. 

The problem is quite well understood in the case $a\leq s$. When $a=b=1$, the question specialises to the Tur\'an problem for $K_{s,t}$, which is also known as the Zarankiewicz problem. The classical K\H ov\'ari--S\'os--Tur\'an theorem \cite{KST54} states that $\ex(n,K_{s,t})=O(n^{2-1/t})$. Matching lower bounds are known only when $t$ is sufficiently large compared to $s$ (see \cite{KRS96,ARS99,bukh2024extremal}). Alon and Shikhelman \cite{alon2016many} determined the order of magnitude of $\ex(n,K_{a,b},K_{s,t})$ in the case where $K_{a,b}$ is ``substantially smaller'' than $K_{s,t}$, and their results were extended in various directions in several papers \cite{ma2018some,bayer2019exploring,gerbner2022generalized}. In particular, Ma, Yuan and Zhang \cite{ma2018some} determined the order of magnitude of $\ex(n,K_{a,b},K_{s,t})$ whenever $t$ is sufficiently large compared to $a,b,s$ and either $a=s$ or $a\leq b< s$. The order of magnitude in the case $a<s\leq b$ is trivial for all values of $t$ (see \cite{gerbner2022generalized}). These results together cover all cases with $a\leq s$ and sufficiently large $t$. 

On the other hand, very little is known about the order of magnitude of $\ex(n,K_{a,b},K_{s,t})$ when $a>s$. Notice that for the question to be meaningful, we must have $s< a\leq b < t$. For this regime, we have the trivial upper bound $\ex(n,K_{a,b},K_{s,t})=O(n^s)$, first observed in \cite{gerbner2019generalized}. Indeed, if a graph $G$ is $K_{s,t}$-free, then any given $s$ vertices extend to at most $O_t(1)$ copies of $K_{a,b}$ in $G$. Prior to our work, a matching lower bound was not known for any $2\leq s<a\leq b<t$. The smallest instance of this problem concerns $\ex(n,K_{3,3},K_{2,t})$, and Spiro \cite{spiro} asked to determine the order of magnitude of this function for large $t$.

In this paper, we answer Spiro's question, and, more generally, determine the order of magnitude of $\ex(n,K_{a,b},K_{s,t})$ for $s=2$ and $s=3$, whenever $t$ is sufficiently large.

\begin{theorem} \label{thm:k3t-free}
    For any $s\in \{2,3\}$ and any $s< a\leq b$, there exists some $t_0$ such that for all $t\geq t_0$, we have
    $$\ex(n,K_{a,b},K_{s,t})=\Theta_{t,b}(n^s).$$
\end{theorem}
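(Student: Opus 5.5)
The upper bound $\ex(n,K_{a,b},K_{s,t})=O(n^s)$ is the trivial one recalled above. Fix $s$ vertices $S$ inside a copy of $K_{a,b}$ that lie on the side of size $a$. Since $G$ is $K_{s,t}$-free, the common neighbourhood of $S$ has fewer than $t$ vertices, and this common neighbourhood contains the whole $b$-side. So the $b$-side can be chosen in at most $\binom{t}{b}$ ways. The remaining $a-s$ vertices of the $a$-side lie in the common neighbourhood of $b\ge s$ vertices, which again has fewer than $t$ elements. Hence each $s$-set lies in $O_{t,b}(1)$ copies of $K_{a,b}$, and summing over all $s$-sets gives $O(n^s)$. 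All the work is in the lower bound: we need a $K_{s,t}$-free graph on $n$ vertices with $\Omega(n^s)$ copies of $K_{a,b}$.

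For the construction I would use algebraic geometry over a finite field $\FF_q$, in the spirit of the norm-graph and random algebraic constructions. The key point is that a fixed $s$-set should typically have common neighbourhood of size $\Theta(1)$, and in fact at least $b$, while never having more than $t-1$ common neighbours. I would take the vertex set to be $\FF_q^{s}$, or a bipartite pair of copies of it, so that $n\approx q^s$. Adjacency $x\sim y$ is given by a system of polynomial equations $f_i(x,y)=0$, cutting out a variety of dimension $s-1$ in the neighbourhood of each vertex. Then the common neighbourhood of $s$ generic vertices is a zero-dimensional variety of bounded degree, so its size is $O(1)$. By a Bukh-style "random polynomial plus Lang--Weil" argument, any $s$-set has either $O(1)$ common neighbours or $\Omega(q)$ of them.

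To get many $K_{a,b}$ rather than merely $K_{s,t}$-freeness, I would design the equations with a symmetric structure. One option is $x\sim y$ iff $N(x\cdot y)=1$ for a norm-like map. Another is to use roots of a polynomial, so that every $s$-set $S$ determines a finite set $T(S)$ of size exactly $b$ (and $|T(S)|\le t-1$ always), with each $b$-set of the form $T(S)$ having common neighbourhood containing at least $a$ vertices. Concretely, let $T(S)$ be the set of roots in $\FF_q$ (or $\FF_{q^k}$) of a degree-$k$ polynomial whose coefficients are determined by $S$. Then choose the field so that a positive proportion of $s$-sets give a polynomial that splits completely with $k\ge b$ roots. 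The symmetry between the two sides would then force the common neighbourhood of $T(S)$ to contain $S$ together with further points, for instance the other solutions of the same system. This yields $\Omega(q^{s})=\Omega(n^s)$ copies of $K_{a,b}$, each counted $O(1)$ times.

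The main obstacle is the simultaneous control. We must certify that no $s$-set has $t$ common neighbours; this is where the Bukh-type dichotomy and deletion of the few bad $s$-sets come in, with $t_0$ depending on the degrees. At the same time, a positive fraction of $s$-sets must extend to a full $K_{a,b}$ with $a>s$. The second requirement is exactly what random constructions fail to give, since generically $b$ common neighbours of an $s$-set have only $s$ common neighbours. So I would first try to build in an explicit algebraic reason for the extra $a-s$ vertices, for example $a$ points lying on a common low-dimensional subvariety of the relevant type, and verify it by hand for $s=2$. I expect this step to be what restricts the theorem to $s\in\{2,3\}$.
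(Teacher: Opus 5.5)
Your upper bound is correct and is the paper's argument. The lower bound, however, is the entire content of the theorem, and the proposal does not supply it. You correctly isolate the obstacle: in a generic algebraic or random graph, nothing forces $b$ common neighbours of an $s$-set to have more than $s$ common neighbours. But neither of your mechanisms overcomes it.

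\begin{itemize}
\item[$\circ$] Norm graphs are built to kill large common neighbourhoods. They are $K_{s,s!+1}$-free (projective norm graphs are even $K_{s,(s-1)!+1}$-free). So for every $b\ge s!+1$, which for $s=2$ means every $b\ge 3$, they contain no $K_{s,b}$ and hence no $K_{a,b}$ at all.
\item[$\circ$] The ``root set $T(S)$'' option only restates what must be proved: a positive proportion of $s$-sets should have $\ge b$ common neighbours whose own common neighbourhood has $\ge a$ vertices. Appealing to ``symmetry between the two sides'' gives no algebraic reason for this, as you yourself acknowledge.
\item[$\circ$] ``Bukh dichotomy plus deletion of the few bad $s$-sets'' is also unverified. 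You would need to show both that the bad sets are few and that deleting them spares most copies of $K_{a,b}$.
\end{itemize}

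The missing idea is to use point--hyperplane incidence graphs in $\mathbb{F}_q^{2s-1}$ with about $q^s$ points and $q^s$ hyperplanes. There the extra $a-s$ vertices come for free from linear algebra: any $a$ points on an $(s-1)$-flat, together with any $b$ hyperplanes containing that flat, form a $K_{a,b}$. Since there are $\Theta(q^{s^2})=\Theta(n^s)$ such flats, it suffices that $\Omega(q^{s^2})$ of them carry $b$ points and lie in $b$ hyperplanes.

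For $s=3$ this framework exposes a difficulty your dichotomy does not address. Three collinear points lie in about $q$ of the hyperplanes, which produces a $K_{3,t}$. So you need a point set with no three collinear points; the paper uses $\{(x,y,x^2-\alpha y^2,z,z^2+P(x,y))\}$ with $\alpha$ a nonsquare and $P$ random of degree $b+9$. For this set, $q^2$ planes unavoidably contain $q$ points each, so ``few bad sets'' has to be handled carefully. The paper:
\begin{enumerate}
\item proves, via a $(d+1)$-th moment bound, Lemma~\ref{boundinglemma} and a union bound, that these are the only planes with $\ge t_0$ points;
\item obtains the hyperplanes by a random projective duality;
\item deletes every hyperplane through a dangerous plane, and dually every point in a dangerous plane, losing only $O(q^8)$ of the $\Omega(q^9)$ copies of $K_{b,b}$.
\end{enumerate}

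In this framework the extra vertices are automatic. What blocks $s\ge 4$ is instead that about $q^s$ points in $\mathbb{F}_q^{2s-1}$ cannot avoid four coplanar points, not the step you guessed. Finally, the $s=2$ case follows from the $s=3$ construction with a maximum-degree bound, by restricting to the neighbourhoods of the endpoints of a heavy edge. Alternatively it follows directly from point--plane incidences in $\mathbb{F}_q^3$.
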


In fact, as we will see, the $s=2$ case of the above theorem follows from (a minor strengthening of) the $s=3$ case.

Our other main result concerns realizable exponents for generalized Tur\'an numbers. A number $r\in [1,2]$ is called a \emph{realizable exponent} if there exists a graph $H$ such that $\ex(n,H)=\Theta(n^r)$. The celebrated Rational Exponents Conjecture of Erd\H os and Simonovits \cite{erdHos1981combinatorial} asserts that every rational number $r\in [1,2]$ is a realizable exponent. Despite substantial recent progress \cite{bukh2018rational, JiangMaYepremyan, kang2021rational, CJL, JiangQiu1, JanzerSub, JiangQiu2, conlon2022rational}, this conjecture is still wide open.

The notion of a realizable exponent extends naturally to generalized Tur\'an numbers. Given a graph $F$, we call a number $r$ a \emph{realizable exponent for $F$} if there exists a graph $H$ for which $\ex(n,F,H)=\Theta(n^r)$. Observe that when $F=K_2$, we recover the usual notion of a realizable exponent. We note that the analogous notion for forbidden families (rather than a single forbidden graph) was investigated thoroughly in a recent paper of English and Spiro \cite{english2025rational}.

Spiro \cite{spiro} conjectured that there are infinitely many realizable exponents for any graph $F$. We prove this conjecture.

\begin{theorem} \label{thm:realizable exponents}
    For every graph $F$ with at least one edge, there are infinitely many real numbers $r$ such that there exists some graph $H$ with $\ex(n,F,H)=\Theta(n^r)$.
\end{theorem}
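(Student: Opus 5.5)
The plan is to produce a sequence $H_1,H_2,\dots$ of forbidden graphs for which $\ex(n,F,H_k)=\Theta(n^{r_k})$ with pairwise distinct exponents $r_k$. The natural tool is Theorem~\ref{thm:k3t-free} together with the trivial bound $\ex(n,F,H)=O(n^{v})$ when forbidding $K_{s,t}$-type structures. The most robust route, however, seems to be to take $H$ to be a disjoint union or a blow-up related to $F$ itself, and to compute exponents via known Zarankiewicz-type constructions with matching lower bounds (e.g.\ norm graphs, or the random algebraic constructions of Bukh \cite{bukh2024extremal}).

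\textbf{Construction.} Let $F$ have $m$ vertices and at least one edge. For a large parameter $t$, I would consider $H=H_t$ obtained from $F$ by replacing a single edge $uv$ with a copy of $K_{s,t}$, for varying $s$. More simply, I would take $H=F\otimes K_{s,t}$-type graphs: here $F[s]$ denotes the blow-up of $F$ where each vertex is replaced by an independent set of size $t$. Then $H=F[t]$ is a bipartite-like blow-up. A cleaner concrete choice is $H_s=K_{s,t}$ with $s$ ranging over all integers larger than $m$, and to show that $\ex(n,F,K_{s,t})=\Theta(n^{r(s)})$ with $r(s)$ an explicit function taking infinitely many values.

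\textbf{Upper bound.} For $s\ge m$ and $t$ large, I would use the lower/upper bounds of Alon--Shikhelman type. Counting copies of $F$ in a $K_{s,t}$-free graph $G$: since $G$ has $O(n^{2-1/s})$ edges, a standard embedding/degeneracy count gives $\ex(n,F,K_{s,t})=O(n^{r})$ with $r$ determined by the structure of $F$ (for instance, via a fractional-cover argument: $r=\max$ over spanning forests, giving $n^{\,c(F)+e'(F)(1-1/s)}$ for suitable parameters).

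\textbf{Lower bound.} Take the random algebraic $K_{s,t}$-free graph of Bukh (valid for $t$ large), which has $\Theta(n^{2-1/s})$ edges and pseudo-random copy counts. Its expected number of copies of $F$ is about $n^{m}p^{e(F)}$ with $p=n^{-1/s}$, provided this quantity is at least the count forced by sparser substructures. This gives $n^{m-e(F)/s}$, or after deleting vertices, the maximum over subgraphs. As $s\to\infty$ these exponents $m-e(F)/s$ are pairwise distinct (since $e(F)\ge1$), which yields infinitely many values.

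\textbf{Main obstacle.} The main obstacle is matching the upper and lower bounds exactly. The upper bound via edge counts is generally not tight for arbitrary $F$, and Bukh's graphs need concentration for subgraph counts of arbitrary $F$. I would first try to restrict to the regime $s$ huge compared to $m$, where the lower bound $n^{m-e(F)/s}$ is close to $n^m$. For the upper bound, I would use the Kővári--Sós--Turán supersaturation-style counting: count homomorphic copies of $F$ via Hölder, bounding by $n^{m}(e(G)/n^2)^{e(F)}$-type expressions. This works when $F$ is Sidorenko-like, but may fail in general. In that case I would replace $F$ by reducing to one edge of $F$ extended by the remaining vertices freely.
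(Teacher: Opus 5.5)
Your choice of $H=K_{s,t}$ with $s$ growing and $t$ large matches the paper, and so do the target exponent $v(F)-e(F)/s$ and the lower bound from Bukh's random algebraic graphs. That lower bound is exactly Theorem~1.4 of Ma, Yuan and Zhang, so no new concentration argument is needed.

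The gap is the upper bound $\ex(n,F,K_{s,t})=O(n^{v(F)-e(F)/s})$, which is the real content of the theorem. Without it you have not shown $\ex(n,F,K_{s,t})=\Theta(n^r)$ for \emph{any} $r$, so you get no realizable exponents at all. None of the tools you propose can supply it:
\begin{itemize}
\item Any argument using only $e(G)=O(n^{2-1/s})$ is doomed. For $F=K_3$, a graph with that many edges can contain $\Theta(e(G)^{3/2})=\Theta(n^{3-3/(2s)})$ triangles, far more than $n^{3-3/s}$. The extremal example is a clique, which is not $K_{s,t}$-free; that is precisely the point.
\item The H\"older-type quantity $n^{v(F)}(e(G)/n^2)^{e(F)}$ points the wrong way. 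A Sidorenko-type property makes it a \emph{lower} bound on homomorphism counts, and already for stars convexity gives $\sum_v d(v)^k\ge n(2e(G)/n)^k$.
\item Fixing one edge and placing the other vertices freely gives $O(e(G)n^{v(F)-2})=O(n^{v(F)-1/s})$. This matches only when $e(F)=1$.
\end{itemize}

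The missing idea is to use $K_{s,t}$-freeness through common neighbourhoods of vertex sets of size up to $v(F)$, not through the edge count. In the paper, set $p=C_0n^{-1/s}$. Call $S$ \emph{heavy} if $|N_G(S)|>p^{|S|}n$, and \emph{admissible} if it is heavy but all its proper subsets are light. The proof then splits into two counts.
\begin{itemize}
\item \emph{Copies with no admissible set.} Then every vertex subset of the copy is light. Embed the vertices of $F$ one at a time: each new vertex lies in the common neighbourhood of its already-embedded neighbours. This gives at most $n^{v(F)}p^{e(F)}$ copies.
\item \emph{Copies containing an admissible set $S$ of size $i$.} Fix a light $T\subset S$ of size $i-1$. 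The bipartite graph between $V(G)\setminus T$ and $N_G(T)$ has no $K_{s-i+1,t}$ with the small side among the candidates, since adding $T$ would give a $K_{s,t}$. Apply K\H{o}v\'ari--S\'os--Tur\'an together with $|N_G(T)|\le p^{i-1}n$. One of the two KST cases is impossible for large $C_0$, and the other shows that only $O(p^{-i})=O(n^{i/s})$ vertices $v$ satisfy $|N_G(v)\cap N_G(T)|>p^in$.
\end{itemize}
Hence there are $O(n^{i-1+v(F)/s})$ admissible sets of size $i$, and the copies of $F$ containing one number $O(n^{v(F)-1+v(F)/s})$. This is $O(n^{v(F)-e(F)/s})$ exactly when $s\ge v(F)+e(F)$, which is why the paper restricts to that range of $s$. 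This is the step your proposal is missing.
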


The rest of this paper is organized as follows. In Section \ref{sec:overview}, we give an overview of the proof of Theorem \ref{thm:k3t-free}. In Section \ref{sec:preliminaries}, we collect a few preliminary lemmas that we will use in our proofs. In Section \ref{sec:k3t-free}, we prove Theorem \ref{thm:k3t-free}. Finally, in Section \ref{sec:realizable exponents}, we prove Theorem \ref{thm:realizable exponents}.

\paragraph{Note added.} While we were finalizing this paper, independently from us and each other, Pohoata, Tidor and Yu \cite{PTY26} and Taranchuk \cite{taranchuk} obtained a similar result to the $s=2$ case of our Theorem~\ref{thm:k3t-free}. They show that one can take $t_0=b+1$ in this result. The approach of Pohoata, Tidor and Yu is similar to ours, whereas Taranchuk's proof is significantly different. Neither of these papers deal with $K_{s,t}$-free graphs for any $s\geq 3$. One of our main contributions is solving the problem for $K_{3,t}$-free graphs, which requires several new ideas beyond the $K_{2,t}$-free case, as will be discussed in the next section.

\section{Proof overview} \label{sec:overview}

Although the formal proof of the $s=2$ case of Theorem \ref{thm:k3t-free} will be deduced from the $s=3$ construction, it is helpful first to explain how one can directly construct a suitable $K_{2,t}$-free graph. We will then discuss the additional difficulties for extending this to $s=3$ and how we overcome them.

Our first goal is to define an $n$-vertex $K_{2,t}$-free graph $G$ which contains $\Theta(n^2)$ copies of $K_{a,b}$, where $t$ is large enough compared to $b$. For simplicity, let us assume that $a=b=3$ as the general case is nearly identical. It is easy to see that in any $n$-vertex $K_{2,t}$-free graph, the number of copies of $K_{3,3}$ minus an edge is at most $O_t(n^2)$. Hence, in order to have $\Theta_t(n^2)$ copies of $K_{3,3}$ in our graph $G$, we must have the property that a positive constant proportion of the copies of $K_{3,3}-e$ extend to a $K_{3,3}$. This is a very atypical property for a general sparse graph, but it does hold for every point-plane incidence graph in $3$ dimensions, as exploited in recent work of Milojevi\'c, Sudakov and Tomon \cite{milojevic2024incidence}. This suggests that we should take $G$ to be a point-plane incidence graph in $\mathbb{F}_q^3$. Let $\mathcal{P}$ be the set of points and let $\mathcal{H}$ be the set of planes in this incidence graph. If each line in $\mathbb{F}_q^3$ contains less than $t$ points of $\mathcal{P}$ and each line is contained in less than $t$ planes of $\mathcal{H}$, then the associated incidence graph $G$ is $K_{2,t}$-free (this is because any two points determine a line, and any two non-parallel planes intersect in a line). To have many copies of $K_{3,3}$ in $G$, we want that there are many lines in $\mathbb{F}_q^3$ simultaneously containing (at least) three points in $\mathcal{P}$ and contained in (at least) three planes in $\mathcal{H}$. That is, there should be many lines with three points on them, but none with $t$ points on them. This suggests that $\mathcal{P}$ should contain around $q^2$ points, in which case a typical line in $\mathbb{F}_q^3$ contains constantly many points from $\mathcal{P}$. To ensure that no line contains more than $t$ points of $\mathcal{P}$, we can use the random algebraic method of Bukh \cite{bukh2015random}, similarly to how it is done in a paper of Sudakov and Tomon \cite{SudakovTomon} on subspace evasive sets. To summarise, we find a set of points $\mathcal{P}$ and a set of planes $\mathcal{H}$ in $\mathbb{F}_q^3$ with the following properties:
\begin{itemize}
    \item $|\mathcal{P}|\approx q^2$,
    \item $|\mathcal{H}|\approx q^2$,
    \item no line in $\mathbb{F}_q^3$ contains at least $t$ points from $\mathcal{P}$,
    \item no line in $\mathbb{F}_q^3$ is contained in at least $t$ planes from $\mathcal{H}$,
    \item a positive constant proportion of the lines in $\mathbb{F}_q^3$ contain at least three points of $\mathcal{P}$ and are, simultaneously, contained in at least three planes in $\mathcal{H}$.
\end{itemize}
The associated point-plane incidence graph is $K_{2,t}$-free, has around $2q^2$ vertices, and has $\Omega(q^4)$ copies of $K_{3,3}$ since the number of lines in $\mathbb{F}_q^3$ is $\Theta(q^4)$. Thus, it gives a suitable construction for $\ex(n,K_{3,3},K_{2,t})=\Omega(n^2)$.

We will now explain what additional difficulties arise when we try to extend this approach to $K_{3,t}$-free graphs. Let us say that we would like to maximize the number of copies of $K_{4,4}$ in an $n$-vertex $K_{3,t}$-free graph. With a very similar argument as above, one can prove that there exist a set of points $\mathcal{P}$ and a set of affine hyperplanes $\mathcal{H}$ in $\mathbb{F}_q^5$ with the following properties:
\begin{itemize}
    \item $|\mathcal{P}|\approx q^3$,
    \item $|\mathcal{H}|\approx q^3$,
    \item no plane in $\mathbb{F}_q^5$ contains at least $t$ points from $\mathcal{P}$,
    \item no plane in $\mathbb{F}_q^5$ is contained in at least $t$ hyperplanes from $\mathcal{H}$,
    \item a positive constant proportion of the planes in $\mathbb{F}_q^5$ contain at least four points of $\mathcal{P}$ and are, simultaneously, contained in at least four hyperplanes in $\mathcal{H}$.
\end{itemize}

Let $G$ be the associated point-hyperplane incidence graph. It is easy to see that the last property implies that the number of copies of $K_{4,4}$  in $G$ is at least $\Omega(q^{9})$ (assuming that no four points in $\mathcal{P}$ are collinear), as the number of planes in $\mathbb{F}_q^5$ is $\Theta(q^9)$. Unfortunately, however, $G$ is not guaranteed to be $K_{3,t}$-free. Indeed, if there exist three collinear points in $\mathcal{P}$, then they determine a line, rather than a plane, so the fourth property does not rule out that they are simultaneously contained in at least $t$ hyperplanes. In fact, since the number of hyperplanes in $\mathcal{H}$ is around $q^3$, a typical line in $\FF_q^5$ is contained in around $q$ hyperplanes in $\mathcal{H}$. (This is because in $\FF_q^5$, there are roughly $q^8$ lines and each hyperplane contains roughly $q^6$ lines.) To rule out copies of $K_{3,t}$ arising this way, we would like to require that in addition to the above five properties, the following two are satisfied as well:
\begin{itemize}
    \item no three points in $\mathcal{P}$ are collinear, \item no three hyperplanes in $\mathcal{H}$ have a $3$-dimensional intersection.
\end{itemize}

If we can find $\mathcal{P}$ and $\mathcal{H}$ with the seven properties described, then we have a suitable construction. While it may be true that such $\mathcal{P}$ and $\mathcal{H}$ exist, we were unable to show this and instead weakened the requirements as follows. Notice that in order to avoid copies of $K_{3,t}$ formed by $t$ points and three hyperplanes, we do not need to require that \emph{every} plane contains fewer than $t$ points from $\mathcal{P}$, only that if a plane contains at least $t$ points, then it should be contained in fewer than three hyperplanes from $\mathcal{H}$. Hence, if we have only a few ``exceptional'' planes which contain many points from $\mathcal{P}$, we can simply delete all hyperplanes from $\mathcal{H}$ that contain one of these planes. Observe that in $\FF_q^5$ a typical plane is contained in constantly many hyperplanes from $\mathcal{H}$. (Indeed, this is because there are roughly $q^9$ planes in $\FF_q^5$, and any hyperplane contains about $q^6$ planes.)  Hence, this deletion process removes roughly as many elements of $\mathcal{H}$ as the number of exceptional planes. Thus, we want to ensure that the number of exceptional planes is much less than $q^3$. To summarize, after deletion we still expect around $q^3$ hyperplanes surviving (so hopefully the incidence graph $G$ still has $\Omega(q^9)$ copies of $K_{4,4}$), but now $G$ does not contain a copy of $K_{3,t}$ with $t$ points and three hyperplanes. Similarly, deleting all points in $\mathcal{P}$ contained in any plane that is contained in at least $t$ hyperplanes from $\mathcal{H}$, we destroy all copies of $K_{3,t}$ with three points and $t$ hyperplanes, so we end up with a $K_{3,t}$-free construction.

Thus, our task now is to find a point set $\mathcal{P}$ in $\mathbb{F}_q^5$ such that

\begin{itemize}
    \item $|\mathcal{P}|\approx q^3$,
    \item no three points from $\mathcal{P}$ are collinear,
    \item there are ``few'' planes in $\mathbb{F}_q^5$ containing at least $t$ points from $\mathcal{P}$,
    \item a positive constant proportion of the planes in $\mathbb{F}_q^5$ contain at least four points of $\mathcal{P}$.
\end{itemize}

Once we have such a point set, it is not hard to construct an analogous set $\mathcal{H}$ of hyperplanes by a simple duality argument. We will prove that

$$\mathcal{P}=\{(x,y,x^2-\alpha y^2,z,z^2+P(x,y)): x,y,z\in \mathbb{F}_q\},$$
where $\alpha$ is a quadratic nonresidue in $\mathbb{F}_q$ and $P$ is a random polynomial of bounded degree, is a suitable construction with positive probability. The proof is given in Section \ref{sec:k3t-free}; here we just remark that the exceptional planes (planes containing at least $t$ points of $\mathcal{P}$) are those of the form $Q_{x,y}:=\{(x,y,x^2-\alpha y^2,u,v):u,v\in \mathbb{F}_q\}$ for any fixed $x,y\in \mathbb{F}_q$. It is clear that these planes contain $q$ elements of $\mathcal{P}$; on the other hand, we will prove that (with high probability) these are the only exceptional planes, and therefore there are at most $q^2$ exceptional planes, as desired.

\section{Preliminaries} \label{sec:preliminaries}

We will use the following instance of a lemma from Bukh and Conlon~\cite{bukh2018rational} which says that a variety over $\mathbb{F}_q$ of bounded complexity either has size $O(1)$ or at least $\Omega(q)$ when $q$ is sufficiently large. 

\begin{lemma}[c.f Lemma 2.7 in \cite{bukh2018rational}]\label{boundinglemma}
    For every $d$, there exists a constant $t_0$ such that the following holds for every field $\FF_q$ with $q$ sufficiently large in terms of $d$. Let $m, n \leq d$. Suppose we have polynomials $p_1(x_1, \dots ,x_n)$, $p_2(x_1, \dots ,x_n)$, $\dots ,p_m(x_1, \dots ,x_n)$ over $\mathbb{F}_q$, each of degree at most $d$. Then the size of the variety $V = \{(z_1, \dots ,z_n)\in \mathbb{F}_q^n : \forall i \in [m], p_i(z_1, \dots ,z_n) = 0 \}$ is either less than $t_0$ or at least $q/2$. 
\end{lemma}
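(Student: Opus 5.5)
This is the Bukh--Conlon dichotomy lemma, and I would prove it by the standard route: Lang--Weil estimates combined with an algebraic decomposition of bounded complexity. First, view $V$ as the $\FF_q$-points of the affine variety $W=Z(p_1,\dots,p_m)\subseteq \overline{\FF}_q^{\,n}$. Decompose $W$ into irreducible components over $\overline{\FF}_q$. Because $m,n,\deg p_i\le d$, the number of components and their degrees are bounded by a function of $d$ alone; this follows from the refined Bézout inequality, which gives total degree at most $d^n\le d^d$. Each component $W_j$ is defined over some finite extension of $\FF_q$, and the Frobenius permutes the components.

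Next, split into two cases. If some component $W_j$ of positive dimension is fixed by Frobenius, it is defined over $\FF_q$ and is geometrically irreducible of bounded degree. Lang--Weil then gives $|W_j(\FF_q)|\ge q^{\dim W_j}-C(d)q^{\dim W_j-1/2}\ge q/2$ once $q$ is large in terms of $d$, so $|V|\ge q/2$. Otherwise, every $\FF_q$-point of $V$ lies either in a zero-dimensional component, or in a component $W_j$ not fixed by Frobenius. In the latter case the point lies in $W_j\cap \sigma(W_j)$, a proper closed subset of $W_j$, which again has bounded complexity by Bézout.

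To handle the second case, induct on dimension. Replace each non-fixed component $W_j$ by $W_j\cap\sigma(W_j)$, which has strictly smaller dimension and degree bounded in terms of $d$. The $\FF_q$-points are preserved. Then repeat the argument on this new variety, whose description complexity is bounded by a function of $d$. After at most $n\le d$ rounds, either we have found a Frobenius-stable positive-dimensional geometrically irreducible piece, giving $|V|\ge q/2$, or all $\FF_q$-points lie in a zero-dimensional set. That set has at most $B(d)$ points by Bézout, so we may take $t_0=B(d)+1$.

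The main obstacle is keeping uniform control of complexity through the induction. Lang--Weil needs an effective error constant depending only on degree and ambient dimension, and the intersections $W_j\cap\sigma(W_j)$ must be cut out by polynomials whose number and degree are bounded in terms of $d$. I would handle this using the Chow-form or degree formulation, which avoids tracking equations: degree is subadditive under intersection via Bézout, and the Lang--Weil constant depends only on $(n,\deg)$. Alternatively, since the statement is cited as an instance of Lemma 2.7 in \cite{bukh2018rational}, one can simply specialize that lemma, which is exactly this dichotomy for varieties of bounded complexity.
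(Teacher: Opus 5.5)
Your proposal is correct and matches the paper's treatment: the paper gives no proof of its own and simply invokes Lemma~2.7 of Bukh--Conlon, exactly as in your closing remark. Your sketch is the standard argument behind that cited lemma, and each step you describe goes through: Lang--Weil for positive-dimensional Frobenius-fixed components, passing to $W_j\cap\sigma(W_j)$ for the non-fixed ones, and inducting on dimension with Bézout keeping the degrees bounded in terms of $d$.
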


 We will use the following version of a lemma of Conlon \cite{conlon2019graphs} in our proof, which roughly states that the events that a random polynomial takes on given values are uniform and independent for a bounded number of points. Conlon's lemma is stated only for $c_i = 0$, but it is relatively straightforward to extend it to the general $c_i \in \FF_q$ case. 
\begin{lemma}[c.f. Lemma 2 in \cite{conlon2019graphs}]\label{conlon-nonzero}
Suppose that $q > \binom{m}{2}$ and $d \geq m - 1$. Then, if $f$ is a uniform random polynomial in $t$ variables over $\FF_q$ of degree at most $d$ and $x_1, \dots, x_m$ are $m$ distinct points in $\FF_q^t$ and $c_1, \dots, c_m$ are some elements of $\FF_q$ then 
$$\PP(f(x_i) = c_i \text{ for all } i \in [m]) = \frac{1}{q^m}. $$
\end{lemma}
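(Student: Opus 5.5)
The plan is to reduce the statement to a linear-algebra fact about the evaluation map, and then either quote Conlon's $c_i=0$ case or reprove surjectivity by interpolation. Let $V$ be the $\FF_q$-vector space of polynomials in $t$ variables of degree at most $d$, so that $f$ is uniform on $V$. Consider the evaluation map $\mathrm{ev}\colon V\to \FF_q^m$, $g\mapsto (g(x_1),\dots,g(x_m))$, which is $\FF_q$-linear. The key observation is that for a linear map, the preimage of any vector $c=(c_1,\dots,c_m)$ is either empty or a coset of $\ker(\mathrm{ev})$. Hence, if $\mathrm{ev}$ is surjective, every fibre has size exactly $|\ker(\mathrm{ev})| = |V|/q^m$. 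Then
$$\PP\bigl(f(x_i)=c_i \text{ for all } i\in[m]\bigr)=\frac{|\mathrm{ev}^{-1}(c)|}{|V|}=\frac{1}{q^m}.$$
So the whole task reduces to showing that $\mathrm{ev}$ is surjective.

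The quickest route to surjectivity is to quote Conlon's original statement (the case $c_i=0$). It gives $|\ker(\mathrm{ev})|/|V| = q^{-m}$. Since $|\mathrm{im}(\mathrm{ev})| = |V|/|\ker(\mathrm{ev})| = q^m$, the map is onto.

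For a self-contained argument, I would prove surjectivity directly by interpolation, in two steps.
\begin{enumerate}
\item Find a linear form $L(y)=\sum_j a_j y_j$ with $a\in\FF_q^t$ that takes distinct values on $x_1,\dots,x_m$. For a fixed pair $i<j$, the condition $L(x_i)=L(x_j)$ says $a\cdot(x_i-x_j)=0$. Since $x_i\neq x_j$, this defines a hyperplane of $\FF_q^t$, containing a $1/q$ fraction of all $a$. A union bound over the $\binom m2$ pairs leaves a good $a$ as long as $\binom m2/q<1$, which is exactly the hypothesis $q>\binom m2$.
\item Set $\lambda_i=L(x_i)$, which are distinct. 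Take the one-variable Lagrange polynomial $h$ of degree at most $m-1$ with $h(\lambda_i)=c_i$. Then $g=h\circ L$ is a polynomial in $t$ variables of degree at most $m-1\le d$, so $g\in V$, and $g(x_i)=c_i$ for all $i$.
\end{enumerate}

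The only step needing care is the existence of the separating linear form in step 1. It is also the only place where the condition $q>\binom m2$ is used; the condition $d\ge m-1$ is used only to ensure that the interpolating polynomial lies in $V$. Everything else is the standard coset-counting argument for linear maps.
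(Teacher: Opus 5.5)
Your proposal is correct. The paper gives no proof of its own: it cites Conlon's $c_i=0$ case and remarks that the extension to general $c_i$ is straightforward. Your observation is exactly that extension: the fibres of the linear evaluation map are cosets of its kernel, so Conlon's statement forces surjectivity, and hence every fibre has size $|V|/q^m$.

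Your self-contained alternative, using a separating linear form and Lagrange interpolation, is the standard proof of the cited lemma. Both hypotheses are used correctly there: $q>\binom m2$ for the union bound, and $d\ge m-1$ to ensure $\deg(h\circ L)\le d$.
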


We make use of the following weaker version of a proposition of Hartshorne \cite{hartshorne},
which will allow us to embed our point set into projective space while preserving its key properties. This will be convenient for our construction of the hyperplane set, as we will employ duality. 
\begin{proposition}[c.f. Proposition I.2.2 in \cite{hartshorne}]\label{affinetoprojective}
    There is an embedding $\varphi:\mathbb{F}_q^n \rightarrow {\rm{PG}}(n, q)$ where a set $A\subseteq \mathbb{F}_q^n$ is a subset of 
    a $k$-dimensional affine subspace of $\FF_q^n$ if and only if $\varphi(A)$ is a subset of a $k$-dimensional projective subspace of ${\rm{PG}}(n,q)$. 
\end{proposition}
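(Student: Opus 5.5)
The statement to prove is Proposition \ref{affinetoprojective}. I would use the standard embedding $\varphi(x_1,\dots,x_n)=[x_1:\dots:x_n:1]$ into ${\rm PG}(n,q)$, the projective space of one-dimensional subspaces of $\mathbb{F}_q^{n+1}$. A projective subspace of dimension $k$ is the set of points $[v]$ with $v\in W\setminus\{0\}$ for some linear subspace $W\le \mathbb{F}_q^{n+1}$ of dimension $k+1$. The proof then consists of a correspondence between affine $k$-flats in $\mathbb{F}_q^n$ and $(k+1)$-dimensional linear subspaces of $\mathbb{F}_q^{n+1}$ not contained in the hyperplane $\{x_{n+1}=0\}$. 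Injectivity of $\varphi$ is immediate, since the last coordinate normalizes the representative.

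For the forward direction, suppose $A\subseteq p+U$, where $U$ is a linear subspace of $\mathbb{F}_q^n$ of dimension $k$. Set $W=\mathrm{span}\{(p,1)\}\oplus (U\times\{0\})$, which has dimension $k+1$. Every point $a=p+u$ of $A$ satisfies $(a,1)=(p,1)+(u,0)\in W$. Hence $\varphi(A)$ lies in the projective $k$-space defined by $W$.

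For the converse, suppose $\varphi(A)\subseteq \mathbb{P}(W)$ with $\dim W=k+1$. If $A$ is empty there is nothing to prove. Otherwise $W$ contains a vector $(p,1)$ with $p\in A$, so $W\not\subseteq\{x_{n+1}=0\}$. Then $U'=W\cap\{x_{n+1}=0\}$ has dimension exactly $k$; write $U'=U\times\{0\}$. For any $a\in A$ we have $(a,1)\in W$, so $(a-p,0)=(a,1)-(p,1)\in U'$, which gives $a\in p+U$, an affine $k$-flat.

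The only delicate point is the dimension count in the converse: intersecting with the hyperplane drops the dimension by exactly one because $W$ is not contained in it. A second point to check is that the statement asks for containment in \emph{some} $k$-dimensional subspace rather than exact span. This is harmless, since a flat of smaller dimension can be enlarged to dimension $k$ (as $k\le n$), and the same holds projectively.
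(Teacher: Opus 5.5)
Your proof is correct and is the standard homogenization argument via $x\mapsto[x:1]$ that the paper relies on by citing Hartshorne; the paper itself gives no further proof, and your dimension count $\dim(W\cap\{x_{n+1}=0\})=k$ is handled properly. Your closing remark about enlarging flats is not needed, since both directions of your argument already produce subspaces of dimension exactly $k$.
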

We also often make use of the following well known fact. Here and throughout the paper, we use $o(1)$ to represent a function that goes to zero as $q$ goes to infinity. 
\begin{proposition} \label{prop:subspace count}
    The number of $k$-dimensional affine (projective) subspaces in affine (projective) $n$-space over $\mathbb{F}_q$ is $(1 +o(1))q^{(k + 1)(n - k)}$.
\end{proposition}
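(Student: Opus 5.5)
We count via Gaussian binomial coefficients, treating the projective case first and deducing the affine case from it; $n$ and $k$ are fixed and $q\to\infty$. Recall that the number of $m$-dimensional linear subspaces of $\FF_q^{N}$ is
$$\binom{N}{m}_q=\prod_{i=0}^{m-1}\frac{q^{N-i}-1}{q^{m-i}-1}.$$
This follows by counting ordered $m$-tuples of linearly independent vectors, which gives $\prod_{i=0}^{m-1}(q^N-q^i)$, and dividing by the number of ordered bases of a fixed $m$-dimensional space, which is $\prod_{i=0}^{m-1}(q^m-q^i)$.

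For the projective case, $k$-dimensional projective subspaces of ${\rm PG}(n,q)$ correspond bijectively to $(k+1)$-dimensional linear subspaces of $\FF_q^{n+1}$. Their number is therefore $\binom{n+1}{k+1}_q$. The product has $k+1$ factors, and each factor is
$$\frac{q^{n+1-i}-1}{q^{k+1-i}-1}=q^{n-k}\cdot\frac{1-q^{-(n+1-i)}}{1-q^{-(k+1-i)}}=q^{n-k}\bigl(1+O(q^{-1})\bigr).$$
Since there are boundedly many factors, the product is $(1+o(1))q^{(k+1)(n-k)}$.

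For the affine case, every $k$-dimensional affine subspace of $\FF_q^n$ has the form $v+W$ with $W$ a $k$-dimensional linear subspace. The direction $W$ is uniquely determined, and for fixed $W$ there are exactly $q^{n-k}$ distinct cosets. Hence the count is $q^{n-k}\binom{n}{k}_q$. By the same factor-by-factor estimate, $\binom{n}{k}_q=(1+o(1))q^{k(n-k)}$, and so the total is $(1+o(1))q^{(k+1)(n-k)}$.

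As a consistency check, one can also view affine $k$-flats as the projective $k$-flats of ${\rm PG}(n,q)$ not contained in the hyperplane at infinity. The excluded ones number $\binom{n}{k+1}_q=O(q^{(k+1)(n-k-1)})$, which is negligible against $q^{(k+1)(n-k)}$.

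No step is a real obstacle. The only point needing care is that the $o(1)$ terms are uniform, which holds because $n$ and $k$ are fixed, so the number of factors is bounded. The degenerate case $k=n$ gives exactly $1=q^0$, which is consistent with the formula.
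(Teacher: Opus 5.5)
Your argument is correct: the Gaussian binomial counts $\binom{n+1}{k+1}_q$ (projective case) and $q^{n-k}\binom{n}{k}_q$ (affine case), estimated factor by factor, give $(1+O(q^{-1}))q^{(k+1)(n-k)}$, and the error is uniform because $n$ and $k$ are fixed. The paper gives no proof of this proposition — it cites it as a well-known fact and uses it only with $n\le 5$ — and your computation is the standard justification for it.
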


In our lower bound for Theorem~\ref{thm:realizable exponents}, we will use the following proposition of Ma, Yuan and Zhang \cite{ma2018some}, obtained using the random algebraic method of Bukh \cite{bukh2015random}. 

\begin{proposition}[Theorem 1.4 in \cite{ma2018some}] \label{prop:realizable lower bound}
Let $F$ be a graph. Suppose that $t$ is sufficiently large in terms of $s$ and $F$. Then 
$$\ex(n, F, K_{s, t}) = \Omega(n^{v(F) - \frac{e(F)}{s}}).$$
\end{proposition}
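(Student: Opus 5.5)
The plan is to use Bukh's random algebraic construction, with parameters chosen so that edges have density $n^{-1/s}$. Then a typical set of $v(F)$ vertices spans a copy of $F$ with probability about $n^{-e(F)/s}$, and $K_{s,t}$-freeness comes from Lemma~\ref{boundinglemma}. Concretely, let $q$ be a large prime power and $d$ a large constant depending on $s$ and $F$ (at least $e(F)$ and at least $s\cdot st$, say). Take a uniformly random polynomial $f(X,Y)$ in $2s$ variables $X,Y\in\FF_q^s$ of degree at most $d$ in each block of variables. Symmetrize it as $g(X,Y)=f(X,Y)+f(Y,X)$, so that the relation is symmetric and $F$ need not be bipartite. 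Define $G$ on vertex set $\FF_q^s$, with $n=q^s$, by joining $x\neq y$ whenever $g(x,y)=0$. By Lemma~\ref{conlon-nonzero}, applied to the distinct unordered pairs (after handling the symmetrization, for instance by working with a random symmetric polynomial directly), any fixed family of at most $d$ distinct pairs are all edges with probability exactly $q^{-(\#\text{pairs})}$. Hence, for an injective map $\phi:V(F)\to\FF_q^s$, the probability that $\phi$ is an embedding of $F$ is $q^{-e(F)}$. Summing over all injective maps gives $\EE[\#F]=\Theta(n^{v(F)}q^{-e(F)})=\Theta(n^{v(F)-e(F)/s})$.

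For $K_{s,t}$-freeness, fix $s$ distinct vertices $x_1,\dots,x_s$. Their common neighbourhood is the variety $\{y\in\FF_q^s: g(x_i,y)=0\ \forall i\}$, which is cut out by $s$ polynomials of bounded degree in $s$ variables. By Lemma~\ref{boundinglemma} it has size either less than $t_0$ or at least $q/2$. Call the $s$-set \emph{bad} if the size is at least $t_0$; we set $t\ge t_0$, which is where ``$t$ sufficiently large'' enters. Following Bukh, I would bound $\PP(\text{bad})$ by a high moment. Let $N$ be the number of common neighbours. Then $\EE[N^k]=O_k(1)$ for $k\le d/s$, again by Lemma~\ref{conlon-nonzero}, since $k$ candidate neighbours give at most $sk\le d$ distinct conditions of probability $q^{-1}$ each. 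Markov's inequality then gives $\PP(N\ge q/2)\le \EE[N^k]/(q/2)^k=O(q^{-k})$. So the expected number of bad $s$-sets is $O(q^{s^2-k})$, which is tiny once $k$ is large.

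Finally, delete one vertex from each bad $s$-set, which makes $G$ $K_{s,t}$-free. The main obstacle is showing that this deletion kills only a small fraction of the copies of $F$, since copies of $F$ and bad sets are correlated. I would handle it by bounding $\EE[Z]$, where $Z$ is the number of pairs (copy of $F$, bad $s$-set sharing a vertex with it). Fix the copy's vertex map and the bad set. Condition on the $e(F)$ edge events, which cost $q^{-e(F)}$. Then apply the moment argument for $N$ conditionally: by Lemma~\ref{conlon-nonzero}, with $d\ge e(F)+sk$, the joint events remain uniform and independent, except for the boundedly many common neighbours forced by the copy itself, and those only shift $N$ by $O(1)$. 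This gives $\EE[Z]\le O(n^{v(F)+s}q^{-e(F)}q^{-k})=o(\EE[\#F])$ for large $k$. A second-moment computation shows $\#F$ is concentrated near its mean; alternatively, it suffices that $\EE[\#F - Z]$ is large, since one instance achieving the expectation suffices. Fixing an outcome with $\#F-Z\ge \tfrac12\EE[\#F]$ and with $o(n)$ deleted vertices yields an $n'$-vertex $K_{s,t}$-free graph, $n'=(1-o(1))q^s$, with $\Omega(n'^{\,v(F)-e(F)/s})$ copies of $F$. Standard interpolation between values of $q$ extends this to all $n$.
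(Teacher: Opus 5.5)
Your argument is correct and is essentially the standard proof of this cited result: the paper gives no proof of its own, only noting that Ma, Yuan and Zhang obtain it via Bukh's random algebraic method, which is what you carry out, with the symmetrized polynomial, the moment bound plus Lemma~\ref{boundinglemma}, and the deletion step controlled by conditional high moments. The one thing to fix is the order of parameters: $d$ must depend on $s$ and $F$ alone, e.g.\ $d\ge 2(e(F)+sk)$ with $k>s^2$ (the factor $2$ accounts for the symmetrization), and not on $t$ as your parenthetical ``$s\cdot st$'' suggests, since $t_0$ is produced from $d$ by Lemma~\ref{boundinglemma} and only afterwards is $t\ge t_0$ chosen.
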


\section{A construction of $K_{3,t}$-free graphs with many copies of $K_{a,b}$} \label{sec:k3t-free}

In this section, we prove Theorem \ref{thm:k3t-free}. In order to be able to deduce the $s=2$ case from the $s=3$ case of this result, we strengthen the conclusion very slightly by proving the following theorem. Note that the extra condition compared to Theorem \ref{thm:k3t-free} is the maximum degree bound.

\begin{theorem} \label{thm:k3t-free with max degree}
    For any $3<a\leq b$, there exists some $t_0$ such that for all sufficiently large~$n$, there is a bipartite $K_{3,t_0}$-free $n$-vertex graph with $\Omega_b(n^3)$ copies of $K_{a,b}$ and maximum degree $O_b(n^{2/3})$.
\end{theorem}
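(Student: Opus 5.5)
We follow the strategy of Section~\ref{sec:overview}. Take $q$ a large prime power, $\alpha$ a quadratic nonresidue, and $P$ a uniformly random polynomial in two variables of degree at most $d$, with $d$ large in terms of $b$. Set
$$\mathcal{P}=\{(x,y,x^2-\alpha y^2,z,z^2+P(x,y)):x,y,z\in\FF_q\}\subseteq \FF_q^5 .$$
Then $|\mathcal{P}|=q^3$. The graph will be a point--hyperplane incidence graph between (a subset of) $\mathcal{P}$ and a set $\mathcal{H}$ of about $q^3$ affine hyperplanes. The $K_{3,t}$-freeness will come from spanning arguments, and the $\Omega(q^9)=\Omega(n^3)$ copies of $K_{a,b}$ will come from counting planes.

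\textbf{No three collinear points.} Suppose a line meets $\mathcal{P}$ in three points. Since $x^2-\alpha y^2$ is an anisotropic quadratic form, a line projecting to a nonconstant line in the $(x,y)$-plane meets the quadric $w=x^2-\alpha y^2$ in at most two points. So the line has fixed $(x,y)$, and then $z\mapsto(z,z^2+\text{const})$ is a conic, which again gives at most two points. Hence no three points of $\mathcal{P}$ are collinear, and any three points span a plane.

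\textbf{Exceptional planes.} Call a plane \emph{exceptional} if it contains at least $t$ points of $\mathcal{P}$. We claim that with high probability the only exceptional planes are the $q^2$ planes $Q_{x,y}$. For a plane $\Pi$ not of that form, the condition $\Pi\cap\mathcal{P}$ is cut out by polynomial equations of bounded degree in $(x,y,z)$, with the coefficients of $P$ entering. By Lemma~\ref{boundinglemma}, each such intersection has size either less than $t_0$ or at least $q/2$. We then bound, in expectation, the number of non-$Q$ planes with at least $q/2$ points. The plan is a moment computation: count $m$-tuples of points of $\mathcal{P}\cap\Pi$ and use Lemma~\ref{conlon-nonzero} to treat the values $P(x_i,y_i)$ at distinct $(x_i,y_i)$ as independent and uniform. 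This case analysis, over planes whose projection to the first three coordinates is a point, a line, or a plane, is the main obstacle, since degenerate projections give correlated constraints. The planes $Q_{x,y}$ are exactly the ones where the projection collapses to a point. We aim to show that the expected number of bad non-$Q$ planes is $o(q^{3})$; delete them if necessary.

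\textbf{Dual hyperplane set and incidence graph.} By Proposition~\ref{affinetoprojective} and projective duality, we build $\mathcal{H}$ dually, again of size about $q^3$. Any three hyperplanes of $\mathcal{H}$ meet in a plane, and only $o(q^3)$ planes lie in at least $t$ of them. Now delete every hyperplane containing an exceptional plane for $\mathcal{P}$. A typical plane lies in $O(1)$ hyperplanes of $\mathcal{H}$, so choosing the random parameters so this holds in expectation, only $O(q^2)+o(q^3)$ hyperplanes are removed. Symmetrically, delete points lying in bad planes for $\mathcal{H}$. After these deletions, three points span a plane lying in fewer than three surviving hyperplanes if it is exceptional, and otherwise span a plane with fewer than $t$ points. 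The same holds with the roles of points and hyperplanes swapped, so the graph is $K_{3,t}$-free.

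\textbf{Counting copies of $K_{a,b}$.} Using Lemma~\ref{conlon-nonzero} on a random plane, a positive proportion of the $\Theta(q^9)$ planes (Proposition~\ref{prop:subspace count}) contain at least $b$ surviving points and lie in at least $b$ surviving hyperplanes. This is a second-moment argument, using independence of the point and hyperplane randomness. Each such plane yields a copy of $K_{a,b}$, and each copy comes from $O(1)$ planes, since $a\ge3$ points span their plane. This gives $\Omega(n^3)$ copies of $K_{a,b}$.

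\textbf{Maximum degree.} A point lies in about $q^3\cdot q^{-1}=q^2=n^{2/3}$ hyperplanes of $\mathcal{H}$ on average. To get a maximum degree bound, we either use concentration or simply delete vertices of degree greater than $Cq^2$, which only costs a small fraction of the copies by Markov's inequality.
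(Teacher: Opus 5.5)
Your architecture is the paper's: the same point set, the same no-three-collinear argument, a dual hyperplane family, deletion of hyperplanes through exceptional planes and of points on planes lying in many hyperplanes, and the same spanning argument for $K_{3,t}$-freeness. The gap is the step you yourself call the main obstacle. You leave it as an aim, and it is the whole content of the key lemma: you never show that the $Q_{x,y}$ are the only planes with $t_0$ points. The missing idea is that the degenerate case needs no probability at all, so the correlations you worry about never arise. If the projection $\pi(\Pi)$ of a plane $\Pi$ to the first three coordinates is a line, that line meets the anisotropic quadric $w=x^2-\alpha y^2$ in at most two points. Above each such point, $\Pi$ meets the fibre in a line, which meets the parabola $\{(z,z^2+P(x,y))\}$ in at most two points. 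So $|\Pi\cap\mathcal{P}|\le 4$ for every $P$; these are exactly the two observations from your collinearity argument. If $\pi(\Pi)$ is a plane, then $\pi$ is injective on $\Pi$. Hence $|\Pi\cap\mathcal{P}|$ is the number of $(x,y)$ on the curve $\Gamma_\Pi=\{\ell_\Pi(x,y,x^2-\alpha y^2)=0\}$ (at most $2q$ points) with $P(x,y)=g_\Pi(x,y)$, for a fixed polynomial $g_\Pi$. Distinct intersection points then have distinct $(x,y)$, so Lemma~\ref{conlon-nonzero} gives $\EE[|\Pi\cap\mathcal{P}|^{d+1}]=O_d(1)$. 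Together with the dichotomy of Lemma~\ref{boundinglemma} and Markov, this gives $\PP(|\Pi\cap\mathcal{P}|\ge t_0)=\PP(|\Pi\cap\mathcal{P}|\ge q/2)=O(q^{-d-1})$. A union bound over the $(1+o(1))q^9$ planes with $d=b+9$ shows that, with probability $1-O(q^{8-d})$, no non-$Q$ plane is bad at all. This failure probability is small enough to beat the trivial $O(q^{9+b})$ bound on coplanar $b$-sets, so you can fix a single good outcome $S$ instead of deleting an unquantified ``$o(q^3)$'' family.

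You also differ from the paper at two further points. For the count, the paper fixes such an $S$ and applies a uniformly random projective transformation $\mathbf{T}$ before dualizing. The $\Omega(q^9)$ planes carrying $b$ points and the $\Omega(q^9)$ planes lying in $b$ hyperplanes are then matched by a uniformly random bijection, and a first-moment computation gives $(1+o(1))\delta^2q^9$ common planes. The deletions cost only $O(q^8)$, since each vertex is deleted with probability $O(1/q)$ and lies in $O(q^6)$ suitable configurations. Your per-plane second-moment route with independent hyperplane randomness (Paley--Zygmund with $2b$-wise independence) can be made to work, but you must do three things: say where that randomness comes from, check that a positive proportion of planes are good on both sides of the duality you use, and bound the configurations destroyed by deletions, which needs the same $O(q^6)$ per-vertex bound. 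For the degree, Markov deletion is fine given that per-vertex bound, but the paper's argument is deterministic. The neighbours of a surviving hyperplane $H$ have no three collinear, and every triple spans a plane inside $H$ carrying fewer than $t_0$ of them (otherwise $H$ would contain a dangerous plane and would have been deleted). So $\binom{\deg H}{3}\le t_0^3(1+o(1))q^6$, giving $\deg H<8t_0q^2$; the argument for points is dual.
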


Before we turn to the proof of Theorem \ref{thm:k3t-free with max degree}, we show how to deduce Theorem \ref{thm:k3t-free} from it.

\begin{proof}[Proof of Theorem \ref{thm:k3t-free}]
    We start with the case $s=3$. Choose $t_0$ according to Theorem~\ref{thm:k3t-free with max degree}. Now for $t\geq t_0$ the lower bound $\ex(n,K_{a,b},K_{s,t})=\Omega_b(n^3)$ follows immediately from Theorem \ref{thm:k3t-free with max degree}. For the upper bound, note that in a $K_{3, t}$-free $n$-vertex graph, the number of copies of $K_{a, b}$ is less than $n^3 \cdot t^{a + b - 3} $. Indeed, for any triple of vertices, the number of common neighbors is at most $t-1$. It follows that $\ex(n,K_{a,b},K_{3,t})=O_{b,t}(n^3)$.

    We now turn to the $s=2$ case. By Theorem \ref{thm:k3t-free with max degree} (applied with the parameters $a+1,b+1,n^{3/2}$ in place of $a,b,n$), there exists some $t_0$ such that for all sufficiently large $n$, there is a bipartite $K_{3,t_0+1}$-free $n^{3/2}$-vertex graph $G_n$ with $\Omega_b(n^{9/2})$ copies of $K_{a+1,b+1}$ and maximum degree $O_b(n)$. We claim that $G_n$ contains a $K_{2,t_0}$-free subgraph $H_n$ on $O_b(n)$ vertices with $\Omega_b(n^2)$ copies of $K_{a,b}$. Indeed, choose the edge $uv$ in $G_n$ that is contained in the most copies of $K_{a+1,b+1}$. Since $G_n$ has at most $|V(G_n)|\Delta(G_n)\leq O_b(n^{5/2})$ edges, there exist $\Omega_b(n^2)$ copies of $K_{a+1,b+1}$ in $G_n$ containing $uv$. Now let $H_n=G_n[(N(u)\setminus \{v\})\cup (N(v)\setminus \{u\})]$. Since $G_n$ has maximum degree $O_b(n)$, $H_n$ has $O_b(n)$ vertices. Note that $H_n$ is $K_{2,t_0}$-free, as any $K_{2,t_0}$ in $H_n$ could be extended to a $K_{3,t_0+1}$ in $G_n$, using the edge $uv$. Finally, each copy of $K_{a+1,b+1}$ in $G_n$ containing $uv$ yields a copy of $K_{a,b}$ in $H_n$. Hence, $H_n$ contains $\Omega_b(n^2)$ copies of $K_{a,b}$. The existence of $H_n$ therefore implies (by adding isolated vertices if $H_n$ has fewer than $n$ vertices, and taking a random $n$-vertex induced subgraph if $H_n$ has more than $n$ vertices) that $\ex(n,K_{a,b},K_{2,t_0})=\Omega_b(n^2)$, proving the lower bound. For the upper bound, note that in a $K_{2, t}$-free $n$-vertex graph, the number of copies of $K_{a, b}$ is less than $n^2 \cdot t^{a + b - 2} $. Indeed, for any pair of vertices, the number of common neighbors is at most $t-1$. It follows that $\ex(n,K_{a,b},K_{2,t})=O_{b,t}(n^2)$.
\end{proof}

It remains to prove Theorem \ref{thm:k3t-free with max degree}.
For every sufficiently large odd prime power $q$, we will build a $K_{3,t_0}$-free bipartite graph on $\Theta(q^3)$ vertices of maximum degree $O(q^2)$ with $\Theta(q^9)$ many copies of $K_{a,b}$.

We will use a point set as given by the following lemma. 

\begin{lemma}\label{goodsset}
    For every integer $b\geq 3$, there exists $\delta > 0$ and $t_0$ such that the following holds for every sufficiently large odd prime power $q$. There exists a set of points $S \subseteq \FF_q^5$ such that:
    \begin{enumerate}
        \item $|S| = q^3$, 
        \item no three points of $S$ are collinear,
        \item there are $q^2$ planes in $ \FF_q^5$ which intersect $S$ in exactly $q$ points; we call these planes  \emph{dangerous} for $S$,
        \item if a plane in $ \FF_q^5$ is not dangerous, then it intersects $S$ in fewer than $t_0$ points, 
        \item there are at least $\delta q^9$ sets of $b$ points in $S$ that are coplanar and the plane they lie in is not dangerous for $S$.  
    \end{enumerate}
    \end{lemma}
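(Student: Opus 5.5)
We take the set suggested in Section~\ref{sec:overview},
$$S=\{(x,y,x^2-\alpha y^2,z,z^2+P(x,y)) : x,y,z\in\FF_q\},$$
where $\alpha$ is a fixed quadratic nonresidue and $P$ is a uniformly random polynomial in two variables of degree at most $D$, with $D$ a large constant depending on $b$. We will show that with positive probability properties 1--5 hold.

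Property 1 is immediate, since $(x,y,z)\mapsto$ point is injective. For property 2, suppose three points are collinear. Projecting to the first three coordinates, the points $(x_i,y_i,x_i^2-\alpha y_i^2)$ are collinear or coincide. The surface $w=x^2-\alpha y^2$ contains no affine line, because $x^2-\alpha y^2$ is an anisotropic quadratic form. A line meets this surface in at most two points unless the projection of the line to the $(x,y)$-plane is a single point. So either the $(x_i,y_i)$ are all equal, or only two distinct values of $(x,y)$ occur. In the first case the last two coordinates $(z_i,z_i^2+c)$ lie on a parabola, which meets a line in at most two points. In the second case two of the three points share $(x,y)$, and the third lies on the line through them; since the first three coordinates of the first two points are equal, so are those of the third. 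This reduces to the first case. So property 2 holds deterministically.

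For properties 3 and 4, the planes $Q_{x,y}$ each contain exactly $q$ points of $S$, giving $q^2$ dangerous planes. Now take a plane $\Pi$ not of this form. By the same argument, the projection of $S\cap \Pi$ to $(x,y,w)$ lies in the intersection of the projection of $\Pi$ with the quadric surface. We split into cases according to the dimension of the projection of $\Pi$ to the first three coordinates.
\begin{itemize}
\item If this projection is 3-dimensional, then $\Pi$ is a graph over those coordinates. The condition on $(x,y)$ becomes a single algebraic condition, and $z$ appears through the remaining two linear equations.
\item If the projection is a plane or a line, we get a conic, or at most two $(x,y)$ values, with the last coordinates linearly constrained.
\end{itemize}
In each case, $S\cap\Pi$ is the zero set of a bounded-degree system in $(x,y,z)$ with coefficients involving $P$ and the parameters of $\Pi$. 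By Lemma~\ref{boundinglemma}, the intersection has size either less than $t_0$ or at least $q/2$.

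So it suffices to rule out non-dangerous planes with $\geq q/2$ points. The plan is the Bukh-style argument. Consider the expected number of pairs consisting of a plane $\Pi$ and a set of $r$ points of $S\cap\Pi$, where $r$ is large. Lemma~\ref{conlon-nonzero} gives independence of the values of $P$ on $r$ distinct $(x,y)$ inputs when $D\ge r-1$. A large intersection forces $r$ points whose $(x,y)$ parts are distinct, or mostly distinct; the degenerate cases with repeated $(x,y)$ are handled deterministically, and those are exactly the $Q_{x,y}$. Counting planes times configurations times $q^{-r}$ should give an expectation that is $o(1)$, or at least $o(q^{\text{small}})$, once $r$ is large. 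Markov's inequality then shows that with probability at least $1/2$ there is no non-dangerous plane with $\ge q/2$ points. I expect this step to be the main obstacle. The dependence of the constraint on $z$ (through $z^2$) means that a plane meeting $S$ in a curve parametrized by $z$ with fixed $(x,y)$ must be checked carefully. The planes with one-dimensional $(x,y)$-projection, where $(x,y)$ ranges over a line or conic, need the independence bound to beat the number of such planes, roughly $q^9$. If a direct union bound fails, the fallback is to count only the planes that contain the relevant curve structure, whose number is much smaller.

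For property 5, we count $b$-sets that are coplanar in non-dangerous planes. For a fixed generic plane $\Pi$ (one projecting onto all three of the first coordinates), the points of $S\cap \Pi$ correspond to $(x,y)$ on a conic-type curve $C_\Pi$ of about $q$ points, each giving a quadratic equation in $z$ together with a condition on $P(x,y)$. The expected size of $S\cap\Pi$ is therefore a constant. Using Lemma~\ref{conlon-nonzero} for $b$ points (with $D\ge b$), the probability that $\Pi$ contains at least $b$ points of $S$ is at least some $\delta'>0$. Summing over the $\Theta(q^9)$ generic planes (Proposition~\ref{prop:subspace count}), the expected number of good $b$-sets is at least $2\delta q^9$. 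The total number is always at most $t_0^{b}q^9$ once properties 3 and 4 hold, so a reverse-Markov argument shows it is at least $\delta q^9$ with probability bounded below. Combining this with the high-probability events above gives a choice of $P$ satisfying all five properties.
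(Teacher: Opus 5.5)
Your construction and most of your plan match the paper's: the same set $S$, the same projection argument for property 2, the same split of planes by the dimension of their projection to the first three coordinates, and the Bukh--Conlon dichotomy combined with a moment bound from Lemma~\ref{conlon-nonzero} and a union bound over planes. The genuine gap is in the final step, where you combine property 5 with property 4. Let $X$ be the number of $b$-sets of $S$ coplanar on a non-dangerous plane, and let $A$ be the event that every non-dangerous plane meets $S$ in fewer than $t_0$ points. A reverse-Markov argument needs an almost-sure upper bound on $X$, but $X\le t_0^b q^9$ holds only on $A$. Off $A$ the only available bound is $X=O(q^{9+b})$, since a useful plane may contain up to $2q$ points of $S$. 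So $\mathbb{E}[X]\ge 2\delta q^9$ tells you nothing about $X\mathbf{1}_A$ unless $\mathbb{E}[X\mathbf{1}_{A^c}]\le O(q^{9+b})\,\mathbb{P}(A^c)$ is $o(q^9)$, i.e.\ unless $\mathbb{P}(A^c)=o(q^{-b})$. You only aim for $\mathbb{P}(A^c)\le 1/2$. With that, nothing prevents the outcomes with $X\ge\delta q^9$ from lying inside $A^c$.

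Your route to bounding $\mathbb{P}(A^c)$ is also mis-stated. On a useful plane the expected number of $r$-subsets of $S\cap\Pi$ is about $\binom{|C_\Pi|}{r}q^{-r}=\Theta(1)$, so the sum over the $\Theta(q^9)$ planes is $\Theta(q^9)$, not $o(1)$, however large $r$ is. What makes it small is the dichotomy: a bad plane carries at least $\binom{q/2}{r}$ such subsets. Markov at that threshold (equivalently, the paper's $(d+1)$-th moment bound for each plane) gives $\mathbb{P}(A^c)=O(q^{9-r})$. To reach $o(q^{-b})$ you need $r\ge b+10$, hence degree at least $b+9$. This is exactly why the paper takes $d=b+9$. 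Then $\mathbb{E}[X\mathbf{1}_A]\ge \mathbb{E}[X]-O(q^{9+b})\,\mathbb{P}(A^c)=\Omega(q^9)$ directly gives an outcome in which $A$ holds and $X=\Omega(q^9)$, with no reverse Markov needed. (Alternatively, count only $b$-sets in useful planes with fewer than $t_0$ points of $S$. That count is deterministically $O(q^9)$, and its expectation differs from $\mathbb{E}[X]$ by at most $O(q^{9+b})\max_\Pi\mathbb{P}(|S\cap\Pi|\ge q/2)$.)

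Smaller points:
\begin{enumerate}
\item The ``3-dimensional projection'' case is vacuous, since a plane projects onto at most a plane.
\item On a useful plane $z$ is an affine function of $(x,y)$, so there is no quadratic equation in $z$.
\item Not every useful plane has $|C_\Pi|=\Theta(q)$; the conic can degenerate to a point or be empty. You need a count like Claim~\ref{countonusefulplanes}, showing that $(1/4+o(1))q^9$ useful planes have at least $q/2$ candidate points.
\end{enumerate}
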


We now describe how to prove Theorem~\ref{thm:k3t-free with max degree} given Lemma~\ref{goodsset}. 

\begin{proof}[Proof of Theorem~\ref{thm:k3t-free with max degree}]

Fix a set $S$ satisfying the conclusions of Lemma~\ref{goodsset}. We will use this set to construct a $K_{3, t_0}$-free graph with many $K_{a, b}$'s. 

We pass from $S \subseteq \FF_q^5$ to $\varphi(S)$ in ${\rm{PG}}(5, q)$ by using the map $\varphi$ from Proposition~\ref{affinetoprojective}.  Note that a set of points in $S$ is collinear (coplanar) if and only if the set of corresponding points in $\varphi(S)$ is collinear (coplanar). We say a plane $Q$ in $ {\rm{PG}}(5, q)$ is \emph{dangerous} for $\varphi(S)$ if there are exactly $q$ points of $\varphi(S)$ that lie on the plane $Q$. By Proposition~\ref{affinetoprojective}, there are $q^2$ dangerous planes for $\varphi(S)$, and every other plane contains strictly less than $t_0$ points of $\varphi(S)$.

Given a subspace $A \subseteq {\rm{PG}}(5, q)$, recall that the orthogonal complement of $A$ is defined as $A^\perp = \{y\in {\rm{PG}}(5,q) : \langle y, a \rangle = 0 \text{ for all } a \in A\}$. Note that if $A$ is a point, then $A^\perp$ is a hyperplane; if $A$ is a line, then $A^{\perp}$ is a $3$-dimensional subspace; and if $A$ is a plane, then $A^{\perp}$ is a plane. Furthermore, observe that containment is reversed when taking orthogonal complements.

Take a random isomorphism $\textbf{T}$ of the space ${\rm{PG}}(5, q)$. Let $\bold{H}$ be the set of hyperplanes $H$ such that for some point $P \in \varphi(S)$, $H = \textbf{T}(P)^{\perp}$. By the conditions on $\varphi(S)$, we have the following conditions on $\bold{H}$.
\begin{enumerate}
    \item $|\bold{H}|=q^3$,
    \item no three hyperplanes in $\bold{H}$ have a three-dimensional intersection,
    \item there are $q^2$ planes in ${\rm{PG}}(5,q)$ which are contained in $q$ hyperplanes from $\bold{H}$; we call these planes \emph{dangerous} for $\bold{H}$, 
    \item if a plane in ${\rm{PG}}(5,q)$ is not dangerous for $\bold{H}$, then it is contained in fewer than $t_0$ hyperplanes of $\bold{H}$, 
    \item there are at least $\delta q^9$ sets of $b$ hyperplanes in $\bold{H}$ whose intersection is a plane which is not dangerous for $\bold{H}$.
\end{enumerate}  

Let $\bold{H}'$ be the subset of $\bold{H}$ formed by removing from $\bold{H}$ all hyperplanes containing a dangerous plane with respect to $\varphi(S)$. Similarly, let $\bold{S}'$ be the subset of $\varphi(S)$ formed by removing from $\varphi(S)$ all points contained in a dangerous plane with respect to $\bold{H}$. The next claim will essentially finish the proof since $|\bold{S}'|+|\bold{H}'|\leq 2q^3$.

\begin{claim} \label{claim:random incidence graph}
    Let $\bold{G}$ be the bipartite graph on vertex set $\bold{S'}\cup \bold{H'}$, with an edge between $P\in \bold{S}'$ and $H\in \bold{H}'$ if $P \in H$. Then,
    \begin{enumerate}
        \item[(i)] $\textbf{G}$ is $K_{3, t_0}$-free,
        \item[(ii)] $\Delta(\bold{G}) = O( q^2)$, 
        \item[(iii)] the expected number of $K_{a, b}$'s in $\textbf{G}$ is  $\Omega(q^9).$
    \end{enumerate}
\end{claim}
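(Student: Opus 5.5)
We treat (i), (ii), (iii) in turn; (i) and (ii) are deterministic, and only (iii) uses the randomness of $\textbf{T}$.

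\emph{Part (i).} Consider a copy of $K_{3,t_0}$ in $\bold{G}$. There are two cases, according to which side carries the three vertices.

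Suppose first that three points $P_1,P_2,P_3\in\bold{S}'$ have $t_0$ common neighbours in $\bold{H}'$. By property 2 of Lemma~\ref{goodsset}, transferred to $\varphi(S)$ via Proposition~\ref{affinetoprojective}, the three points are not collinear. So they span a plane $Q$, and every common neighbour contains $Q$. If $Q$ is not dangerous for $\bold{H}$, property 4 of $\bold{H}$ says $Q$ lies in fewer than $t_0$ hyperplanes of $\bold{H}$, a contradiction. If $Q$ is dangerous for $\bold{H}$, then $P_1\in Q$ should have been deleted when forming $\bold{S}'$, again a contradiction.

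Symmetrically, suppose three hyperplanes $H_1,H_2,H_3\in\bold{H}'$ have $t_0$ common neighbours in $\bold{S}'$. By property 2 of $\bold{H}$, their intersection has dimension at most $2$. In $\mathrm{PG}(5,q)$ three hyperplanes meet in dimension at least $2$, so the intersection is exactly a plane $Q$. All $t_0$ common points lie on $Q$. If $Q$ is dangerous for $\varphi(S)$, each $H_i$ would have been removed. Otherwise $Q$ contains fewer than $t_0$ points of $\varphi(S)$. Either way we have a contradiction.

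\emph{Part (ii).} We bound degrees on each side by counting subspaces in $\mathrm{PG}(5,q)$ and using that no three points of $\varphi(S)$, respectively no three hyperplanes of $\bold{H}$, are "degenerate".

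For a hyperplane $H$, its points in $\varphi(S)$ contain no three collinear points. Fix a point $P\in H\cap\varphi(S)$ and partition the remaining points of $H\cap\varphi(S)$ according to the plane they span with a fixed line through $P$; this is awkward, so we count differently. The lines through $P$ inside $H\cong\mathrm{PG}(4,q)$ number $O(q^3)$, which is too weak. Instead we take pairs of points: each pair lies on a unique line, and distinct pairs give distinct lines since no three points are collinear. This only gives $|H\cap\varphi(S)|^2\le O(q^6)$, again too weak.

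So we use the plane structure. Fix two points $P_1,P_2\in H\cap\varphi(S)$. Every other point of $H\cap\varphi(S)$ lies on one of the $O(q^2)$ planes in $H$ containing the line $P_1P_2$. Each such plane either is dangerous, or contains fewer than $t_0$ points. A dangerous plane that is not contained in $H$ meets $H$ in a line, which contains at most $2$ points by the no-three-collinear condition. A dangerous plane contained in $H$ is excluded, because $H\in\bold{H}'$. Hence $\deg(H)\le O(t_0q^2)$.

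The dual argument bounds the degree of each point $P\in\bold{S}'$. We fix two hyperplanes containing $P$; their intersection is a $3$-space $U$, and every other hyperplane through $P$ meets $U$ in a plane through $P$, of which there are $O(q^2)$. The same dangerous/non-dangerous dichotomy then gives $\deg(P)=O(q^2)$.

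\emph{Part (iii).} This is the main obstacle. We start from the $\ge\delta q^9$ non-dangerous coplanar $b$-sets of $\varphi(S)$ from property 5, with spans $Q$, and the $\ge\delta q^9$ families of $b$ hyperplanes of $\bold{H}$ whose intersection is a non-dangerous plane. Let $\mathcal{A}$ be the set of planes containing at least $b$ points of $\varphi(S)$ and not dangerous. Since each non-dangerous plane carries fewer than $t_0$ points, we get $|\mathcal{A}|\ge\delta q^9/\binom{t_0}{b}$. Similarly, the set $\mathcal{B}$ of non-dangerous planes contained in at least $b$ hyperplanes of $\bold{H}$ satisfies $|\mathcal{B}|=\Omega(q^9)$.

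The randomness of $\textbf{T}$ makes $\mathcal{B}$ a uniformly random image of a fixed family, because $\textbf{T}$ acts transitively on planes. Hence $\EE|\mathcal{A}\cap\mathcal{B}|=|\mathcal{A}||\mathcal{B}|/N$, where $N=(1+o(1))q^9$ is the number of planes by Proposition~\ref{prop:subspace count}, and so $\EE|\mathcal{A}\cap\mathcal{B}|=\Omega(q^9)$.

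Each $Q\in\mathcal{A}\cap\mathcal{B}$ gives a copy of $K_{b,b}\supseteq K_{a,b}$ in the incidence graph, provided none of the chosen points or hyperplanes were deleted. So we must subtract the planes $Q$ in $\mathcal{A}\cap\mathcal{B}$ that have a deleted point or a deleted hyperplane.

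A deleted hyperplane contains one of the $q^2$ dangerous planes of $\varphi(S)$; each such plane lies in $O(q^3)$ hyperplanes of $\mathrm{PG}(5,q)$. Under the random $\textbf{T}$, the expected number of hyperplanes of $\bold{H}$ containing it is $O(1)$, so the expected number of deleted hyperplanes is $O(q^2)$. Each hyperplane of $\bold{H}$ lies in $O(q^6)$ planes, but only planes in $\mathcal{B}$ matter, and each hyperplane lies in $O(q^6)$ planes of $\mathcal{B}$ by the degree bound (ii) combined with $t_0$-boundedness. So deleted hyperplanes spoil at most $O(q^2\cdot q^6)=o(q^9)$ planes in expectation.

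Symmetrically, each deleted point lies in $O(q^6)$ planes, and the expected number of deleted points is $O(q^2)$, because each of the $q^2$ dangerous planes of $\bold{H}$ is a random plane that meets $\varphi(S)$ in $O(1)$ points in expectation. So deleted points also spoil only $o(q^9)$ planes in expectation.

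The bookkeeping for these deletion expectations under $\textbf{T}$ is the delicate part. Subtracting the spoiled planes from $\EE|\mathcal{A}\cap\mathcal{B}|=\Omega(q^9)$ yields $\Omega(q^9)$ copies of $K_{a,b}$ in expectation.
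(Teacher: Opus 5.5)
Your proposal is correct and follows essentially the paper's proof. Part (i) is the same deletion argument. Part (ii) uses the same fact, that planes inside $H$ (resp.\ through $P$) are non-dangerous and so carry fewer than $t_0$ elements, but organised around a fixed line (resp.\ fixed $3$-space) rather than over triples. Part (iii) is the same random-duality matching of point-rich and hyperplane-rich non-dangerous planes with an $O(q^8)$ deletion loss, counted over planes instead of pairs of $b$-sets.

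One slip needs fixing: a plane of $\mathrm{PG}(5,q)$ lies in $q^2+q+1$ hyperplanes, not $O(q^3)$. It is the $q^2$ count that gives the $O(1)$ expectation you assert; with $q^3$ you would only get $O(q)$, and hence $O(q^9)$ spoiled planes.
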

\begin{proof}[Proof of (i):]
    Suppose there was some $K_{3, t_0}$ in $\textbf{G}$. Assume that it is formed by three points in $\bold{S}'$ and $t_0$ hyperplanes in $\bold{H}'$ (the other case follows from a dual argument). By our choice of $\bold{S}'$, the three points are not collinear, so they define a plane. This plane is contained in at least $t_0$ hyperplanes in $\bold{H}'$ and hence also in at least $t_0$ hyperplanes in $\bold{H}$, therefore this plane is dangerous for $\bold{H}$. However, we had deleted all points of $\varphi(S)$ contained in a dangerous plane for $\bold{H}$, which is a contradiction. Thus, $\textbf{G}$ is $K_{3, t_0}$-free.  \end{proof}
    \begin{proof}[Proof of (ii):]Let $H\in \bold{H}'$. Let $\bold{N}(H)$ be its neighborhood in $\bold{G}$. By Proposition~\ref{prop:subspace count}, $H$ contains $(1+o(1))q^6$ planes. Note that any three points in $\bold{N}(H)$ determine a plane contained in the hyperplane $H$, as $\bold{N}(H) \subseteq \bold{S}'\subseteq \varphi(S)$ contains no three collinear points. On the other hand, every plane contained in $H$ contains fewer than $t_0$ points from $\bold{N}(H)$ (as otherwise it would be dangerous for $\varphi(S)$ in which case we would have deleted $H$). Combining these inequalities, we have ${|\bold{N}(H)| \choose 3}/t_0^3\leq (1+o(1))q^6$, which implies  $|\bold{N}(H)|<8t_0q^2$.

Take now some point $P \in \bold{S}'$, and let $\bold{N}(P)$ be its neighborhood in $\bold{G}$. By Proposition~\ref{prop:subspace count}, $P$ is contained in $(1+o(1))q^6$ planes. Note that any three hyperplanes in $\bold{N}(P)$ intersect in a plane containing the point $P$, as $\bold{N}(P) \subseteq \bold{H}'\subseteq \bold{H}$ contains no three hyperplanes with a three-dimensional intersection. On the other hand, every plane containing $P$ is contained in fewer than $t_0$ hyperplanes from $\bold{N}(P)$ (as otherwise it would be dangerous for $\bold{H}$ in which case we would have deleted $P$). Combining these inequalities, we have ${|\bold{N}(P)| \choose 3}/t_0^3\leq (1+o(1))q^6$, which implies  $|\bold{N}(P)|<8t_0q^2$.
    \end{proof}
    
\begin{proof}[Proof of (iii):] We now seek to count the number of  $K_{a, b}$'s in $\textbf{G}$. We begin by counting the number of $K_{b, b}$'s in $\textbf{G}$; the desired lower bound for $K_{a,b}$'s will then follow easily. 

We call a set of $b$ coplanar points from $\varphi(S)$ \emph{suitable} if the plane they determine is not dangerous for $\varphi(S)$. Similarly, we call a set of $b$ hyperplanes from $\bold{H}$ that intersect in a plane \emph{suitable} if the plane they intersect in is not dangerous for $\bold{H}$. By the choice of $S$, there are at least $\delta q^9$ suitable sets in $\varphi(S)$.

For a suitable set of points $B$, let  $\bold{H}(B) = \{ \textbf{T}(P)^{\perp} : P \in B\}$. Note that a set of points $B$ is suitable if and only if the set of hyperplanes $\bold{H}(B)$ is suitable. 
Fix two suitable sets $A$ and $B$ of $b$ coplanar points from $\varphi(S)$. 

Let $\Pi_A$ be the plane containing the $b$ points of $A$ and let $\Pi_B$ be the plane containing the $b$ points of $B$. 

We claim that the intersection of the hyperplanes in $\bold{H}(B)$ is $\textbf{T}(\Pi_B)^{\perp}$. Indeed for every point $P \in B$, $\bold{T}(P) \subseteq \bold{T}(\Pi_B)$, and therefore $\bold{T}(P)^{\perp} \supseteq \bold{T}(\Pi_B)^{\perp}$. As $\bold{H}$ contains no three hyperplanes meeting in a three-dimensional subspace, the intersection of the hyperplanes in $\bold{H}(B)$ is indeed exactly $\bold{T}(\Pi_B)^{\perp}$. 

Now, by Proposition \ref{prop:subspace count}, the probability that $\Pi_A = \textbf{T}(\Pi_B)^{\perp}$ is $(1 +o(1))q^{-9}$.  Summing up over all choices of a suitable $A$ and $B$, we have a lower bound of $(1+o(1))\delta^2 q^9$ for the expected number of $K_{b, b}$'s formed by picking a suitable set from $\varphi(S)$ and a suitable set from $\bold{H}$. Call such $K_{b, b}$'s suitable. 
Unfortunately, some suitable $K_{b, b}$'s were destroyed when we removed hyperplanes which contain a dangerous plane for $\varphi(S)$ and when we removed points which were contained in a dangerous plane for $\bold{H}$. We now will prove an upper bound on the expected number of destroyed $K_{b, b}$'s. 

Note that each point of $\varphi(S)$ (respectively, each hyperplane of $\bold{H}$) is in at most $t_0^{2b - 3}q^6$ suitable $K_{b, b}$'s. Indeed, there are at most $q^6$ choices for another two points from $\varphi(S)$. Since no three points of $\varphi(S)$ are collinear, these three points define some plane in $\textrm{PG}(5, q)$. Assuming that this plane contains at most $t_0$ points from $\varphi(S)$ and is contained in at most $t_0$ hyperplanes from $\bold{H}$ (as otherwise any formed $K_{b, b}$'s are not suitable), there are at most $t_0^{ b - 3}$ choices for the remaining points of $\varphi(S)$ and at most $t_0^b$ choices for the remaining points of $\bold{H}$.

Observe that the probability that a given point in $\varphi(S)$ was deleted is at most $(1+o(1))q^{-1}$. Indeed, each point is in $(1 + o(1))q^{6}$ planes, and the probability that a specific one of them is dangerous for $\bold{H}$ is $(1 + o(1))q^{-7}$. Thus, by the union bound, the probability that one of them is dangerous is at most $(1+o(1))q^{-1}$.
Similarly, for any $P\in \varphi(S)$, the probability that the hyperplane $\bold{T}(P)^{\perp}$ was deleted from $\bold{H}$ is at most $(1+o(1))q^{-1}$. Indeed, $\bold{T}(P)^{\perp}$ is a uniformly random hyperplane. We delete it from $\bold{H}$ precisely if it contains one of the $q^2$ dangerous planes for $\varphi(S)$. Since each plane is contained in $(1+o(1))q^2$ hyperplanes and there are $(1+o(1))q^5$ hyperplanes in total, it follows that the proportion of hyperplanes containing a dangerous plane for $\varphi(S)$ is at most $(1+o(1))q^{-1}$. Hence, $\bold{T}(P)^{\perp}$ gets deleted from $\bold{H}$ with probability at most $(1+o(1))q^{-1}$.

Thus, by the previous two paragraphs, the expected number of suitable $K_{b, b}$'s which were removed is at most $(2 + o(1))t_0^{2b - 3}q^8$. Thus, the expected number of $K_{b, b}$'s in $\textbf{G}$ is at least $\Omega(q^9)$. 

 Each $K_{a, b}$ in $\bold{G}$ can be extended to at most $t_0^{b - a}$ many $K_{b, b}$'s as $\bold{G}$ is $K_{3, t_0}$-free. Thus, the expected number of $K_{a, b}$'s in $\textbf{G}$ is at least $\Omega(q^9)$.
\end{proof}

Now if $n$ is sufficiently large, let $q$ be the largest prime with $2q^3\leq n$. By Claim \ref{claim:random incidence graph}, there exists a bipartite $K_{3,t_0}$-free graph $G_0$ with at most $2q^3$ vertices, $\Delta(G_0)=O(q^2)$ and $\Omega(q^9)$ copies of $K_{a,b}$. Adding isolated vertices if necessary, and noting that $q=\Theta(n^{1/3})$, we obtain a suitable $n$-vertex graph.
\end{proof}

The rest of this section is devoted to the proof of Lemma~\ref{goodsset}. 

\subsection{Proof of Lemma~\ref{goodsset}}
Fix a nonsquare $\alpha\in \FF_q$, and let $d = b + 9$. Let $t_0$ be the constant returned by Lemma~\ref{boundinglemma} for the integer $d$.

We will take
$$\bold{S} = \{(x, y, x^2 - \alpha y^2, z, z^2 + \textbf{P}(x, y)): x,y,z\in \mathbb{F}_q\}\subset \mathbb{F}_q^5,$$
where $\textbf{P}(x,y)$ is a random polynomial of degree at most $d$.

First we show that such a set does not contain three collinear points (for any choice of $\textbf{P}$), making note of the following two observations. 
\begin{observation}\label{frontprojectionsnotcollinear}
    The set $\{(x, y, x^2 - \alpha y^2):x, y \in \FF_q\}\subset \mathbb{F}_q^3$ does not contain three collinear points. 
\end{observation}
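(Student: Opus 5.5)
The plan is to show directly that three distinct points of the form $(x_i, y_i, x_i^2-\alpha y_i^2)$ cannot lie on a common line in $\FF_q^3$. We parametrize a putative line and reduce collinearity to a quadratic equation. This equation can have only two roots because $x^2-\alpha y^2$ is an anisotropic quadratic form, since $\alpha$ is a nonsquare.

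First, take three distinct points $p_i=(x_i,y_i,x_i^2-\alpha y_i^2)$, $i=1,2,3$, lying on a line $\ell$.

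\emph{Case 1: the line projects to a single point.} Suppose the projection of $\ell$ onto the first two coordinates is a single point. Then $(x_i,y_i)$ are all equal, so the third coordinates agree too. The three points therefore coincide, a contradiction.

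\emph{Case 2: the line projects to a line.} Otherwise the projection is injective on $\ell$, and we may write $\ell=\{(x_0+\lambda u,\ y_0+\lambda v,\ c+\lambda w):\lambda\in\FF_q\}$ with $(u,v)\neq(0,0)$. A point of $\ell$ lies in the set exactly when
\[
(x_0+\lambda u)^2-\alpha(y_0+\lambda v)^2 = c+\lambda w .
\]
This is a polynomial equation in $\lambda$ whose $\lambda^2$-coefficient is $u^2-\alpha v^2$.

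The key step is that $u^2-\alpha v^2\neq 0$ whenever $(u,v)\neq(0,0)$. Indeed, if $v\neq 0$ then $u^2-\alpha v^2=0$ would give $\alpha=(u/v)^2$, a square. If $v=0$ then $u\neq 0$, so the coefficient is $u^2\neq 0$. Hence the equation is a genuine quadratic in $\lambda$, which has at most two roots in the field $\FF_q$. So at most two points of the set lie on $\ell$, contradicting the existence of three.

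I do not expect any real obstacle. The only point needing care is the degenerate case in which the line is "vertical", i.e.\ its direction is $(0,0,1)$, and Case 1 handles it. The role of $q$ being odd is only to guarantee that a nonsquare $\alpha$ exists. Once this observation is in hand, the collinearity claim for $\bold{S}$ itself should follow by applying it to the projection onto the first three coordinates, together with a separate treatment of triples sharing $(x,y)$ using the parabola $z\mapsto (z,z^2+\textbf{P}(x,y))$.
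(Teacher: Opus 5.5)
Your proof is correct: a vertical line (direction $(0,0,1)$) meets the graph of $x^2-\alpha y^2$ in at most one point. On any other line the membership condition is a polynomial in $\lambda$ with leading coefficient $u^2-\alpha v^2\neq 0$, because $\alpha$ is a nonsquare, so it has at most two roots. The paper states this as an observation without proof; your anisotropy argument is the reason it holds and the reason $\alpha$ is taken to be a nonsquare. Your closing remark about deducing Lemma~\ref{lem:no collinear triple} also matches the paper's argument.
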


\begin{observation}\label{backprojectionsnotcollinear}
    For all $C \in \FF_q$, the set $\{(z, z^2 + C) : z \in \FF_q\}\subset \mathbb{F}_q^2$ does not contain three collinear points. 
\end{observation}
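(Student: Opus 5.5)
The plan is to argue directly from the fact that a nonzero polynomial of degree at most two over the field $\FF_q$ has at most two roots. Nothing beyond elementary facts about lines in $\FF_q^2$ is needed, and the claim holds for every $C\in\FF_q$ and every $q$.

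I split according to the form of a line $L$ in $\FF_q^2$. Every such line is either vertical, $L=\{(c,w):w\in\FF_q\}$ for some $c\in\FF_q$, or of the form $L=\{(u,mu+k):u\in\FF_q\}$ for some $m,k\in\FF_q$. It suffices to show that in each case $L$ meets the set $\{(z,z^2+C):z\in\FF_q\}$ in at most two points.

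\emph{Vertical lines.} A point $(z,z^2+C)$ lies on $L$ only if $z=c$. Since the first coordinate determines the point, $L$ contains at most one point of the set.

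\emph{Non-vertical lines.} A point $(z,z^2+C)$ lies on $L$ if and only if $z^2+C=mz+k$, that is, $z^2-mz+(C-k)=0$. This is a monic polynomial of degree $2$ in $z$, so it is nonzero and has at most two roots in $\FF_q$. Distinct roots $z$ give distinct points, since the first coordinate is $z$. Hence $|L\cap\{(z,z^2+C)\}|\le 2$.

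Combining the two cases, no line contains three points of the set, which proves the observation. There is no real obstacle here. The only point needing care is to treat vertical lines separately, because they cannot be written as graphs $w=mu+k$.
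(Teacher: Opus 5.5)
Your proof is correct and complete. The paper states this as an observation without proof, and your argument is the standard one it leaves implicit: a vertical line meets the parabola in at most one point, and a non-vertical line gives a monic quadratic in $z$ with at most two roots.
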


\begin{lemma} \label{lem:no collinear triple}
Let $\bold{S} = \{(x, y, x^2 - \alpha y^2, z, z^2 + \bold{P}(x, y)): x,y,z\in \mathbb{F}_q\}\subset \mathbb{F}_q^5$, with $\bold{P}(X,Y)$ some polynomial over $\mathbb{F}_q$. Then, $\bold{S}$ does not contain three collinear points. 
\end{lemma}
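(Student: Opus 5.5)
Suppose three distinct points $p_i=(x_i,y_i,x_i^2-\alpha y_i^2,z_i,z_i^2+\bold{P}(x_i,y_i))$, $i=1,2,3$, of $\bold{S}$ lie on a common line $\ell\subset\FF_q^5$. I will derive a contradiction by projecting onto coordinates. Let $\pi:\FF_q^5\to\FF_q^3$ be the projection onto the first three coordinates. Since $\pi$ is affine, the images $\pi(p_1),\pi(p_2),\pi(p_3)$ lie on the affine set $\pi(\ell)$, which is either a line or a single point. The argument splits into two cases according to whether the pairs $(x_i,y_i)$ are all distinct or all equal. A mixed case cannot occur: if exactly two of the $\pi(p_i)$ coincide, then $\pi(\ell)$ contains two equal and one different image. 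But $\pi(\ell)$ is a line parametrised affinely, so a single point of $\pi(\ell)$ has a fibre in $\ell$ that is either one point or all of $\ell$. In the second situation $\pi$ is constant on $\ell$, so all three images coincide.

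\emph{Case 1: the three pairs $(x_i,y_i)$ are distinct.} The points $(x_i,y_i,x_i^2-\alpha y_i^2)$ are distinct, and they are collinear since they lie on the line $\pi(\ell)$. This contradicts Observation~\ref{frontprojectionsnotcollinear}. (The observation holds because on a line, $x$ and $y$ are affine in a parameter $\lambda$, so $x^2-\alpha y^2-$(an affine function of $\lambda$) is a polynomial of degree at most $2$ in $\lambda$. Its leading coefficient is $u^2-\alpha v^2\neq 0$ for a nonzero direction $(u,v)$, since $\alpha$ is a nonsquare. If $(u,v)=(0,0)$, the first two coordinates are constant and the points coincide. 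So the polynomial has at most two roots.)

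\emph{Case 2: all $(x_i,y_i)$ equal some $(x,y)$.} Set $C=\bold{P}(x,y)$. The points then differ only in the last two coordinates, which are $(z_i,z_i^2+C)$. Since the first three coordinates are constant on them, the line $\ell$ has zero direction in those coordinates. Hence the projection onto the last two coordinates maps $\ell$ bijectively to a line containing the three distinct points $(z_i,z_i^2+C)$. This contradicts Observation~\ref{backprojectionsnotcollinear}, which is just the fact that a non-vertical line meets a parabola in at most two points, while a vertical line $z=\text{const}$ meets it in only one point.

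The proof is essentially routine. The only point needing care is the reduction excluding the mixed case, namely that collinearity is preserved under the affine projection and that the fibres of $\pi$ on $\ell$ are a single point or all of $\ell$. Note that no property of $\bold{P}$ is used.
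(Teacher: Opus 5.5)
Your proof is correct and follows the same route as the paper: project to the first three coordinates, use Observation~\ref{frontprojectionsnotcollinear} to force all three projections to coincide, then apply Observation~\ref{backprojectionsnotcollinear} to the last two coordinates. Your fibre argument excluding the case where exactly two projections coincide spells out a step the paper states in one line, and you also supply proofs of the two observations, which the paper leaves unproved.
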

\begin{proof}
    Suppose on the contrary that there are three collinear points $P^1, P^2, P^3$ in $\bold{S}$. Then their projections on the first three coordinates are collinear. By Observation \ref{frontprojectionsnotcollinear}, at least two of the three projections must coincide. But recall that $P^1, P^2, P^3$ are collinear and distinct, so then all three projections must be the same. 

    Fix the $x, y$ such that these three projections are $(x, y, x^2 - \alpha y^2)$. Then, we have that the projections on the last two coordinates are three distinct collinear points on the parabola $\{(z, z^2 + \textbf{P}(x, y)): z\in \mathbb{F}_q\}$. This contradicts Observation \ref{backprojectionsnotcollinear}. 
\end{proof}

We now seek to classify how planes intersect the set $\bold{S}$.

Fix some plane $H \subseteq \FF_q^5$, and let $\pi(H)$ denote the projection of $H$ onto the first three coordinates. If the dimension of $\pi(H)$ is zero, we call such a plane \textit{risky}. If the dimension of $\pi(H)$ is one, we call such a plane \textit{tolerable}. If the dimension of $\pi(H)$ is two, we call such a plane \textit{useful}. 

\begin{lemma}\label{lemma-characterization}
For $\bold{S} = \{(x, y, x^2 - \alpha y^2, z, z^2 + \bold{P}(x, y)): x,y,z\in \mathbb{F}_q\}\subset \mathbb{F}_q^5$, with $\bold{P}(x, y)$ some uniform random polynomial of degree at most $d$, 
\begin{enumerate}
    \item[P1.] There are $q^2$ risky planes which intersect $\bold{S}$ and each one intersects $\bold{S}$ in exactly $q$ points. 
    \item[P2.] Each tolerable plane intersects $\bold{S}$ in at most $4$ points. 
    \item[P3.] The probability that there is some useful plane which intersects $\bold{S}$ in at least $t_0$ points is at most $O(q^{8 - d})$. 
\end{enumerate}
    
\end{lemma}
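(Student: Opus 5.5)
The plan is to treat the three types of planes separately, using the structure of $\bold{S}$ as a "parabola bundle" over the surface $\Sigma=\{(x,y,x^2-\alpha y^2)\}\subset\FF_q^3$. For \emph{P1}, note that a risky plane $H$ has $\pi(H)$ equal to a single point $(x_0,y_0,w_0)$, so $H=\{(x_0,y_0,w_0,u,v):u,v\in\FF_q\}$. Such a plane meets $\bold{S}$ iff $w_0=x_0^2-\alpha y_0^2$. This gives exactly $q^2$ planes (one for each $(x_0,y_0)$), and each contains exactly the $q$ points $(x_0,y_0,w_0,z,z^2+\bold{P}(x_0,y_0))$, $z\in\FF_q$.

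For \emph{P2}, let $H$ be tolerable, so $L=\pi(H)$ is a line in $\FF_q^3$. By Observation~\ref{frontprojectionsnotcollinear}, $L$ contains at most $2$ points of $\Sigma$; equivalently, the quadratic $x^2-\alpha y^2-w$ restricted to $L$ is not identically zero and has at most two roots. Over each such point $(x_0,y_0,w_0)$, the fibre $H\cap \pi^{-1}(x_0,y_0,w_0)$ is a line, since $\dim H=2$ and $\dim\pi(H)=1$. This line lies in the plane $Q_{x_0,y_0}$, and by Observation~\ref{backprojectionsnotcollinear} it meets the parabola $\{(z,z^2+\bold{P}(x_0,y_0))\}$ in at most $2$ points. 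Hence $|H\cap\bold{S}|\le 4$.

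For \emph{P3}, let $H$ be useful. Then $\pi|_H:H\to\pi(H)$ is an affine bijection, so the last two coordinates on $H$ are affine functions $\ell_4,\ell_5$ of $(x,y,w)\in\pi(H)$. A point $(x,y,x^2-\alpha y^2,z,z^2+\bold{P}(x,y))$ lies in $H$ iff three conditions hold: $(x,y,x^2-\alpha y^2)\in\pi(H)$, $z=\ell_4$, and $\bold{P}(x,y)=\ell_5-\ell_4^2$, with $w=x^2-\alpha y^2$ substituted. So $|H\cap\bold{S}|=|V_H|$, where
\[
V_H=\{(x,y)\in\FF_q^2:\ g_H(x,y)=0,\ \bold{P}(x,y)=Q_H(x,y)\}.
\]
Here $g_H$ is the equation of $\pi(H)$ pulled back to $\FF_q^2$ (of degree at most $2$), and $Q_H$ is a fixed polynomial of degree at most $4$ determined by $H$. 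The first key step is to show that the curve $C_H=\{g_H=0\}$ has $O(q)$ points, say at most $2q$, and I would split into two cases.
\begin{itemize}
\item If $\pi(H)$ is non-vertical, $w=ax+by+c$, then $C_H$ is the conic $x^2-\alpha y^2=ax+by+c$. Since $\alpha$ is a nonsquare, completing squares gives a norm-form equation, which has at most $q+1$ solutions.
\item If $\pi(H)$ is vertical, $ax+by=c$, then $C_H$ is a line with $q$ points.
\end{itemize}
In either case $g_H\not\equiv 0$.

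Next, since all defining polynomials have degree at most $d$, Lemma~\ref{boundinglemma} shows that if $|V_H|\ge t_0$ then $|V_H|\ge q/2$. Fix $m=d+1$. For any $m$ distinct points of $C_H$, Lemma~\ref{conlon-nonzero} (valid since $d\ge m-1$) gives probability $q^{-m}$ that $\bold{P}=Q_H$ at all of them. Therefore $\EE\binom{|V_H|}{m}\le\binom{2q}{m}q^{-m}$, and Markov's inequality yields
\[
\PP(|V_H|\ge q/2)\le \binom{2q}{m}q^{-m}\Big/\binom{q/2}{m}=O(q^{-m})=O(q^{-d-1}).
\]
A union bound over the $O(q^9)$ planes of $\FF_q^5$ (Proposition~\ref{prop:subspace count}) then gives $O(q^{8-d})$.

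I expect the main obstacle to be the bookkeeping in P3: verifying that $\pi|_H$ is a bijection so that $\ell_4,\ell_5$ are well defined, that distinct points of $H\cap\bold{S}$ give distinct $(x,y)$ (true because $z$ is determined by $(x,y)$), and that $C_H$ is never all of $\FF_q^2$ or otherwise large. Degree and variable-count bounds also need to fit within Lemma~\ref{boundinglemma}, with $d=b+9\ge 4$ so that $\deg(\bold{P}-Q_H)\le d$.
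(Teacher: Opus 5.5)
Your proposal is correct and follows the paper's proof essentially step for step: the same fibre analysis for risky and tolerable planes via the two observations, and for useful planes the same reduction of $|H\cap\bold{S}|$ to a two-polynomial variety in $(x,y)$, then Lemma~\ref{boundinglemma}, a $(d+1)$-st moment via Lemma~\ref{conlon-nonzero}, Markov's inequality, and a union bound over $O(q^9)$ planes. The only differences are cosmetic. You use the factorial moment $\binom{|V_H|}{d+1}$ where the paper uses the raw moment $|V_H|^{d+1}$ with a surjection count, and you bound $|C_H|$ by an explicit conic/line case split, which uses that $q$ is odd, where the paper uses the generic zero count $2q$ for a nonzero quadratic.
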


\begin{proof}[Proof of P1. ]
    If the dimension of $\pi(H)$ is zero, then we have that it contains a single point $(x, y, w)$. If $\bold{S} \cap H \neq \emptyset$, then we have $w =  x^2 - \alpha y^2$ for some $x, y \in \FF_q$. We have then that $H = \{(x, y, x^2 - \alpha y^2, u, v)$ for all $u, v \in \FF_q\}$. Thus, $|H \cap \bold{S}| = q$. Now, given that there are $q^2$ choices for $x, y$, there are $q^2$ choices for such a plane $H$.
\end{proof}

\begin{proof}[Proof of P2.]
     Suppose now that $\dim(\pi(H)) = 1$. Thus, $\pi(H \cap \textbf{S})$ is a set of collinear points, and so by Observation~\ref{frontprojectionsnotcollinear}, there are at most two of them. Fix one such projection $(x, y, x^2 - \alpha y^2)$. Now, since the $\dim(\pi(H)) = 1$, we have that the set of $(z, v)$ such that $(x, y, x^2 - \alpha y^2, z, v) \in H$ forms a line in $\FF_q^2$. Then by Observation~\ref{backprojectionsnotcollinear}, we have that $|H\cap \textbf{S} \cap \pi^{-1}(x, y, x^2 - \alpha y^2)| \leq 2$. Thus, $|H \cap \bold{S}| \leq 4$.  
\end{proof}

\begin{proof}[Proof of P3.] Let us examine some $H$ such that $\dim(\pi(H)) = 2$. Observe that $\pi(H)=\{(x,y,w)\in \mathbb{F}_q^3: \ell_H(x,y,w)=0\}$ for some degree $1$ polynomial $\ell_H$. Since $\dim(\pi(H)) = \dim(H) = 2$, for each $(x, y, w) \in \pi(H)$, there is a unique point in $H$ with first three coordinates $(x,y,w)$. Let $g_{4, H}(x, y, w), g_{5, H}(x, y, w)$ be the fourth and fifth coordinates of this unique point in $H$. Observe that $g_{4, H}$ and $g_{5, H}$ are polynomials of degree at most $1$. For simplicity of notation, let $g_H(x, y) = g_{5, H}(x, y, x^2 - \alpha y^2) - g_{4, H}(x, y, x^2 - \alpha y^2)^2$. 
Then, we have that \begin{equation*}
    |H \cap \bold{S}| = |\{(x, y) : \ell_H(x, y, x^2 - \alpha y^2) = 0 \text{ and } \textbf{P}(x, y) = g_H(x, y)\}|.
\end{equation*}

Note that for a given $H$, the number of $x, y$ such that $\ell_H(x, y, x^2 - \alpha y^2) = 0$ is no more than $2q$, as $\ell_H(x, y, x^2 - \alpha y^2)$ is a nonzero polynomial of degree two in two variables.

We now seek to show that the probability that $|H \cap \bold{S}| > t_0$ is $O(q^{- d - 1})$. To prove this, we will use a bound on the $(d + 1)$th moment of $|H \cap \bold{S}|$. 
\begin{claim}
    $$\EE[|H \cap \textbf{S}|^{d+1}] \leq (d + 1)^{d + 2}2^{ d+ 1}.$$
\end{claim}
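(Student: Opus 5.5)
Fix a useful plane $H$ and let $Z_H=\{(x,y)\in\FF_q^2:\ell_H(x,y,x^2-\alpha y^2)=0\}$. We saw above that $|Z_H|\le 2q$. Then
$$|H\cap\textbf{S}|=\sum_{(x,y)\in Z_H}\mathbf{1}[\textbf{P}(x,y)=g_H(x,y)].$$
Note that $Z_H$ and $g_H$ are deterministic and depend only on $H$, so all randomness sits in $\textbf{P}$. The plan is to expand the $(d+1)$th power as a sum over ordered $(d+1)$-tuples of points of $Z_H$:
$$\EE\big[|H\cap\textbf{S}|^{d+1}\big]=\sum_{(p_1,\dots,p_{d+1})\in Z_H^{d+1}}\PP\big(\textbf{P}(p_i)=g_H(p_i)\ \text{for all } i\big).$$
We then group the tuples according to the number $m$ of distinct points among $p_1,\dots,p_{d+1}$, where $1\le m\le d+1$.

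For a tuple with exactly $m$ distinct points, the event in question only constrains $\textbf{P}$ at those $m$ distinct points. Since $m\le d+1$, i.e.\ $d\ge m-1$, and $q>\binom{m}{2}$ for $q$ large, Lemma~\ref{conlon-nonzero} (applied in $2$ variables with targets $c_i=g_H(p_i)$) gives probability exactly $q^{-m}$. Next we bound the number of tuples with exactly $m$ distinct entries. We choose a partition of the $d+1$ positions into $m$ classes, for which $m^{d+1}\le (d+1)^{d+1}$ is a crude bound on the number of surjections. We then choose the $m$ distinct values from $Z_H$, giving at most $|Z_H|^m\le (2q)^m$ choices. Hence the contribution of each $m$ is at most $(d+1)^{d+1}(2q)^m q^{-m}=(d+1)^{d+1}2^m\le (d+1)^{d+1}2^{d+1}$. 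Summing over the $d+1$ values of $m$ yields $(d+1)^{d+2}2^{d+1}$, which is exactly the claimed bound.

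The only delicate point is making sure Lemma~\ref{conlon-nonzero} applies for every $m$ up to $d+1$. This holds because the degree bound $d\ge m-1$ is exactly the condition $m\le d+1$, and $q$ is sufficiently large. Also, $g_H$ is not random, so the targets are fixed constants, as the lemma requires. Beyond that, the argument is a routine moment computation; the key input is the bound $|Z_H|\le 2q$, which cancels the factor $q^{-m}$ term by term.
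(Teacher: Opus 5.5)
Your proposal is correct and follows essentially the same argument as the paper. Like the paper, you expand the $(d+1)$th moment over ordered tuples in $\Gamma_H^{d+1}$ and group by the number of distinct entries. You then apply Lemma~\ref{conlon-nonzero} to get probability exactly $q^{-k}$, bound the number of tuples by $(d+1)^{d+1}(2q)^k$, and sum over $k \le d+1$.
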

\begin{proof}
    We use that, assuming that $k\leq d+1$ and $(x_1, y_1), \dots ,(x_k, y_k)$ are distinct points in $\FF_q^2$, $\PP(\forall i \in [k], \textbf{P}(x_i, y_i) = g_H(x_i, y_i)) =  \frac{1}{q^k}$ by Lemma~\ref{conlon-nonzero}. Let $\Gamma_H = \{ (x, y) \in \FF_q^2: \ell_H(x, y, x^2 - \alpha y^2) = 0\}$. Since the number of surjections $[d +1] \rightarrow [k]$ is at most $(d+1)^{d+1}$, we have

\begin{align*}
    \EE[|H \cap \bold{S}|^{d + 1}] &= \sum_{(u_1, v_1), \dots ,(u_{d + 1}, v_{d + 1}) \in \Gamma_H}\PP(\forall i \in [d + 1], \textbf{P}(u_i, v_i) = g_H(u_i, v_i) )\\
    &\leq \sum_{k = 1}^{ d + 1} (d+1)^{d+1}\sum_{(x_1, y_1), \dots ,(x_{k}, y_{k}) \in \Gamma_H \text{ distinct}}\PP(\forall i \in [k], \textbf{P}(x_i, y_i) = g_H(x_i, y_i))\\
    &\leq \sum_{k = 1}^{ d + 1} (d+1)^{d+1} (2q)^k q^{-k} \\
    &\leq (d + 1)^{d + 2} 2^{d + 1}. 
\end{align*}
\end{proof}

Let $V_H$ be the variety defined by  $\ell_H(x,y, x^2 - \alpha y^2) = 0$ and $\textbf{P}(x, y) = g_H(x, y)$ in $\FF_q^2$. Note that $|H\cap \bold{S}|=|V_H|$. On the other hand, Lemma~\ref{boundinglemma} says that either $|V_H| < t_0$ or $|V_H| \geq q/2$ when $q$ is sufficiently large.  
Now, combining this with the above claim and using Markov's inequality, we see that \begin{align*}
    \PP(|H \cap \bold{S}| \geq t_0) &= \PP(|H \cap \bold{S}| \geq q/2)\\ &= \PP(|H \cap \bold{S}|^{ d + 1} \geq (q/2)^{ d + 1})\\ &\leq \frac{\EE[|H \cap \bold{S}|^{ d + 1}]}{(q/2)^{ d + 1}}\\&\leq (d + 1)^{d + 2} 4^{ d + 1} q^{-1 - d}.
\end{align*}

Thus, applying a union bound over all $(1 + o(1))q^9$ many useful planes, we see that the probability that some useful plane has at least $t_0$ points in $\bold{S}$ is at most $2(d + 1)^{d+2}4^{d + 1} q^{8 - d}.$
\end{proof}

We also make the following observation. 
\begin{claim}\label{countonusefulplanes}
For at least $(1/4+o(1))q^9$ useful planes $H$, the curve $\Gamma_H$ from the proof of Lemma \ref{lemma-characterization} has size at least $q/2$. 
\end{claim}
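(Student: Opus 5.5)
Our plan is to parametrize useful planes by the data $(\ell_H, g_{4,H}, g_{5,H})$ and show that for a constant proportion of them the conic-type curve $\Gamma_H=\{(x,y):\ell_H(x,y,x^2-\alpha y^2)=0\}$ has about $q$ points. Write $\ell_H(x,y,w)=\lambda w+\mu x+\nu y+c$ up to scaling. A useful plane $H$ is the graph over the affine plane $\pi(H)\subset\FF_q^3$ of an affine map $(x,y,w)\mapsto(g_{4,H},g_{5,H})$. So useful planes correspond bijectively to pairs consisting of an affine plane $\Pi\subset \FF_q^3$ and an affine map $\Pi\to\FF_q^2$. The number of affine maps is $q^6$ for every $\Pi$. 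Hence it suffices to show that at least $(1/4+o(1))$ of the $(1+o(1))q^3$ affine planes $\Pi\subset\FF_q^3$ have $|\Gamma_\Pi|\ge q/2$. By Proposition~\ref{prop:subspace count}, the total number of useful planes is then $(1+o(1))q^9$, and the count follows.

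Next we restrict to the non-vertical planes $\Pi$, i.e.\ those with $\lambda\neq 0$. These are $q^3$ in number, all of the form $w=\mu x+\nu y+c$. For them $\Gamma_\Pi$ is the affine conic $x^2-\alpha y^2-\mu x-\nu y-c=0$. Completing squares turns this into $X^2-\alpha Y^2=c'$, where $c'=c+\mu^2/4-\nu^2/(4\alpha)$ and $q$ is odd. Since $\alpha$ is a nonsquare, $X^2-\alpha Y^2$ is the norm form from $\FF_{q^2}$ to $\FF_q$. Therefore for every $c'\neq 0$ the equation has exactly $q+1$ solutions, while for $c'=0$ it has only the solution $(0,0)$.

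It remains to count. For fixed $(\mu,\nu)$, the map $c\mapsto c'$ is a bijection, so exactly $q-1$ of the $q$ values of $c$ give $|\Gamma_\Pi|=q+1\ge q/2$. This yields $q^2(q-1)=(1+o(1))q^3$ good planes $\Pi$, and hence $(1+o(1))q^9$ useful planes $H$ with $|\Gamma_H|\ge q/2$. This is far more than the claimed $(1/4+o(1))q^9$. If one prefers to avoid the norm-form fact, there is an alternative: each nonzero value of $X^2-\alpha Y^2$ is attained equally often, since the norm map is a surjective homomorphism onto $\FF_q^*$. Summing over $c'$ then gives an average of $q^2/q$ points, and a second-moment or Cauchy--Schwarz argument would still produce a constant fraction of planes with $\ge q/2$ points.

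The main potential obstacle is simply verifying the bijection between useful planes and the triples $(\Pi, g_4, g_5)$, and checking that non-vertical $\Pi$ already give a positive proportion. Both are routine linear algebra, so I expect no real difficulty.
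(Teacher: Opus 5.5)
Your proof is correct, but it takes a different route from the paper.

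The paper argues by double counting. Each of the $q^4$ points $(x,y,x^2-\alpha y^2,u,v)$ lies on $(1+o(1))q^6$ useful planes, so $\sum_H|\Gamma_H|\ge(1+o(1))q^{10}$ over at most $(1+o(1))q^9$ useful planes. The paper then uses the a priori bound $|\Gamma_H|\le 2q$, which holds because $\ell_H(x,y,x^2-\alpha y^2)$ is a nonzero polynomial of degree at most two. Averaging then gives $(1/4+o(1))q^9$ planes with $|\Gamma_H|\ge q/2$.

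You instead make three observations. First, $\Gamma_H$ depends only on $\pi(H)$. Second, useful planes are exactly graphs of affine maps $\Pi\to\FF_q^2$ over affine planes $\Pi\subset\FF_q^3$, with $q^6$ such maps per $\Pi$. Third, for non-vertical $\Pi$ you can compute $|\Gamma_\Pi|$ exactly via the norm form of $\FF_{q^2}/\FF_q$: it equals $q+1$ unless $c'=0$. This gives $q^9-q^8=(1-o(1))q^9$ good useful planes, which is much stronger than the claim. In fact vertical $\Pi$ give lines with $q$ points, so all but $q^8$ useful planes are good.

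Your approach gives an exact description of $\Gamma_H$. The paper's approach is more robust. It uses nothing about the surface $w=x^2-\alpha y^2$ beyond its having $q^2$ points and $O(q)$ points on each plane section, and it needs no arithmetic of $\FF_{q^2}$. Since the constant plays no role downstream, either argument suffices.

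Your closing "alternative" can be dropped. Equidistribution of the norm over $\FF_q^*$ is the same fact that already gives the exact count $q+1$, so it does not avoid the norm-form input.
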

\begin{proof}

Observe that there are $q^4$ vectors of the form $(x, y, x^2 - \alpha y^2, u, v)$ for $x, y, u, v \in \FF_q$. Each one lies on $(1 + o(1))q^6$ useful planes. Hence, $\sum_{H} |\Gamma_H|\geq (1+o(1))q^{10}$, where the sum is over all useful planes. Since there are (at most) $(1+o(1))q^9$ useful planes and for each useful plane $H$ we have $|\Gamma_H|\leq 2q$, the claim follows.   
\end{proof}

\begin{proof}[Proof of Lemma~\ref{goodsset}] Let $\bold{S} = \{( x, y, x^2 - \alpha y^2, z, z^2 + \textbf{P}(x, y)):x,y,z\in \mathbb{F}_q\}$ for some $\textbf{P}$ a uniform random polynomial of degree at most $d$.

By Claim~\ref{countonusefulplanes}, there are at least $(\frac{1}{4}+o(1))q^9$ useful planes $H$ for which $|\Gamma_H|\geq q/2$.  Fix such a plane $H$. Then the set of points $$W_H :=\{ (x, y, x^2 - \alpha y^2, g_{4, H}(x, y, x^2 - \alpha y^2), g_{5, H}(x, y, x^2 - \alpha y^2)): (x, y) \in \Gamma_H \} \subseteq H$$
has cardinality at least $q/2$. There are thus $\Omega(q^b)$ choices of a set of $b$ points on $W_H$. We have then $\Omega(q^{9 + b})$ many pairs consisting of a useful plane $H$ and a set of $b$ points on $W_H$. Note that each  point $(w_1, \dots ,w_5)\in W_H$ is in $\bold{S}$ if and only if $\bold{P}(w_1, w_2) = g_H(w_1, w_2)$. 
 Hence, the probability that a given set of $b$ points from $W_H$ 
lie on $\bold{S}$ 
is $q^{-b}$ by Lemma~\ref{conlon-nonzero}, since $d \geq b - 1$. Thus, the expected number of pairs consisting of a useful plane and $b$ points from $\bold{S}$ lying on it is $\Omega(q^9).$ Note that, by Lemma \ref{lem:no collinear triple}, no three points of $\bold{S}$ are collinear, so there is a unique plane containing any $b$ coplanar points of $\bold{S}$. Hence, the expected number of $b$-element point sets in $\bold{S}$ which are coplanar on a useful plane is $\Omega(q^9)$.

Let $X$ be the random variable counting the number of $b$-element point sets in $\bold{S}$ which are coplanar on a useful plane. We have just proved that $\mathbb{E}[X]=\Omega(q^9)$. Let $A$ be the event that every useful plane contains fewer than $t_0$ points of $\bold{S}$. By Lemma \ref{lemma-characterization}, the probability of $A$ failing is at most $O(q^{8 - d})$. Note that since no three points of $\bold{S}$ are collinear, every plane contains at most $2q$ points of $\bold{S}$, and therefore the number of coplanar $b$-element sets in $\bold{S}$ is at most $O(q^{9+b})$.

Hence,
$$\mathbb{E}[X \bold{1}_A]=\mathbb{E}[X]-\mathbb{E}[X \bold{1}_{A^c}]\geq \mathbb{E}[X]-O(q^{9+b})\mathbb{P}(A^c)\geq \Omega(q^9)-O(q^{9+b}q^{8-d})\geq \Omega(q^9),$$
since $d=b+9$. Thus, there exists an outcome $S$ of $\bold{S}$ such that the event $A$ holds and the number of $b$-element point sets in $\bold{S}$ which are coplanar on a useful plane is $\Omega(q^9)$.

By Lemma~\ref{lemma-characterization}, there are $q^2$ risky planes which intersect $S$ in exactly $q$ points, and thus there are at least $q^2$ dangerous planes. On the other hand, we claim that no other plane contains at least $t_0$ points of $S$ (so in particular there are exactly $q^2$ dangerous planes). Indeed, by Lemma \ref{lemma-characterization}, each tolerable plane contains at most $4$ points of $S$, and since the event $A$ holds, each useful plane contains fewer than $t_0$ points of $S$.

Thus, $S$ satisfies the conclusion of the lemma. 
\end{proof}

\section{Infinitely many realizable exponents} \label{sec:realizable exponents}

In this section, we prove Theorem \ref{thm:realizable exponents}. The key result is as follows.

\begin{proposition} \label{prop:realizable upper bound}
    Let $F$ be a graph and let $s\geq v(F) + e(F)$. Then $\ex(n,F,K_{s,t})=O(n^{v(F)-e(F)/s})$.
\end{proposition}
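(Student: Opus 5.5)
The plan is to reduce the count of copies of $F$ to moment bounds on common neighbourhood sizes in a $K_{s,t}$-free graph $G$, and then combine these bounds using H\"older's inequality. The slack $s\geq v(F)+e(F)$ is what should let H\"older run with exponents that never exceed $s$.

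Let $G$ be an $n$-vertex $K_{s,t}$-free graph and write $k=v(F)$, $e=e(F)$. Fix an ordering $w_1,\dots,w_k$ of $V(F)$. Let $B_i\subseteq\{w_1,\dots,w_{i-1}\}$ be the set of back-neighbours of $w_i$, and put $k_i=|B_i|$, so that $\sum_i k_i=e$. An embedding of $F$ is built by choosing the images $x_1,\dots,x_k$ in turn, and $x_i$ must lie in the common neighbourhood $N(\{x_j:w_j\in B_i\})$. For $k_i=0$ we read this as all of $V(G)$, which gives a factor $n$. The number of copies of $F$ is therefore at most
$$\sum_{x_1,\dots,x_k}\ \prod_{i:\,k_i\ge1}\mathbf 1\bigl[x_i\in N(X_i)\bigr],\qquad X_i=\{x_j: w_j\in B_i\}.$$
The target is $n^{k-e/s}=\prod_i n^{1-k_i/s}$. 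So each vertex $w_i$ should contribute, on average, a factor $n^{1-k_i/s}$, which is the heuristic size of a common neighbourhood of $k_i$ vertices.

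The basic input is a moment bound that holds for every $j\le s$ and every $p\le s$. Double counting pairs ($j$-set $X$, ordered $s$-tuple inside $N(X)$), and using that every $s$-set $Y$ has $|N(Y)|<t$, gives
$$\sum_{|X|=j}|N(X)|^{s}\ \lesssim\ \sum_{|X|=j}\binom{|N(X)|}{s}s!+O(n^{j})\ \le\ \sum_{|Y|=s}\binom{|N(Y)|}{j}+O(n^j)=O_t(n^{s}).$$
Here $|X|=j$ ranges over $j$-sets and $|Y|=s$ over $s$-sets. By the power mean inequality, for $p\le s$ we then get $\sum_{|X|=j}|N(X)|^{p}\le O(n^{j(1-p/s)}\cdot n^{p})$. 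Equivalently, the $p$-th moment of $|N(X)|$ over a uniformly random $j$-set is $O(n^{p(1-j/s)})$, which is the right order.

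For the main step, I would rewrite the sum as an iterated sum and peel vertices off from the last one. For the final vertex we have $\sum_{x_k}\mathbf 1[x_k\in N(X_k)]=|N(X_k)|$, so the count becomes a sum over $(x_1,\dots,x_{k-1})$ of a product involving $|N(X_k)|$. At each stage we then apply generalized H\"older over the at most $k+e$ factors that appear, namely the indicator and size factors for each $w_i$. Each factor is raised to a power at most $k+e\le s$, and the resulting sums are exactly moments of the form controlled above; the structure of $F$ only enters through which vertices are summed jointly. This step is the main obstacle. The factors are not independent: the sets $X_i$ overlap, and $x_i$ itself belongs to later sets $X_{i'}$. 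H\"older must therefore be applied so that each resulting sum is a moment over a single $j$-set with at most $s$-th powers, and the exponents have to add up to exactly $n^{k-e/s}$ and not something weaker.

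My first attempt at this obstacle is to treat each edge $w_jw_i$ with $j<i$ as contributing a factor $n^{-1/s}$. Concretely, I would bound the number of homomorphisms by a product over vertices of normalized quantities $\bigl(n^{-k_i}\sum_{|X|=k_i}|N(X)|^{k+e}\bigr)^{1/(k+e)}$, each of which is $O(n^{1-k_i/s})$ by the moment bound, so that the product is $O(n^{\sum_i(1-k_i/s)})=O(n^{k-e/s})$. If a single H\"older step cannot do this, the fallback is a dyadic decomposition of the vertices $x_j$ by the sizes of the relevant common neighbourhoods, with the moment bound controlling how many tuples fall into each dyadic class.
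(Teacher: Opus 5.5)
Your moment bound $\sum_{|X|=j}|N(X)|^s=O(n^s)$ is correct. The gap is in the step you flag as the main obstacle: the inequality you propose there is false. Take $F=K_3$, so $k+e=6$ and every ordering has back-degrees $0,1,2$. Let $G$ be a disjoint union of $n/3$ triangles, which is $K_{2,2}$-free. Then $\mathrm{hom}(K_3,G)=2n$, while your product of normalized moments is $n\cdot 2\cdot(n^{-2}\cdot n)^{1/6}=2n^{5/6}$. Disjoint copies of any connected $F$ containing a cycle break it the same way, for any ordering in which only $w_1$ lacks back-neighbours.

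The reason is that averaging $|N(X)|^{k+e}$ over uniformly random $k_i$-sets cannot see that embeddings concentrate on the rare $k_i$-sets with non-empty or large common neighbourhood. A global moment bound also does not control how large common neighbourhoods interact with the other edges of $F$, for instance whether a pair $x,y$ with large $|N(x)\cap N(y)|$ is itself an edge. Concretely, for triangles the natural H\"older step from your inputs, $\mathrm{hom}(K_3,G)=\sum_{(x,y):\,xy\in E(G)}|N(x)\cap N(y)|\le(2e(G))^{1-1/s}\bigl(\sum_{x\neq y}|N(x)\cap N(y)|^{s}\bigr)^{1/s}$, gives only $O(n^{3-3/s+1/s^2})$. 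Splitting the edges dyadically by codegree and bounding each class with the same moment bounds gives the same exponent, so your dyadic fallback does not close the gap as described.

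The missing idea is a \emph{local} use of $K_{s,t}$-freeness inside common neighbourhoods: if $|T|=j$, then in the bipartite graph between $V(G)\setminus T$ and $N(T)$, no $s-j$ vertices on the first side have $t$ common neighbours. For triangles this already suffices. The link $G[N(x)]$ is $K_{s-1,t}$-free, so the K\H ov\'ari--S\'os--Tur\'an theorem gives $\sum_{y\in N(x)}|N(x)\cap N(y)|=O(\deg(x)^{2-1/(s-1)}+\deg(x))$. H\"older against $\sum_x\deg(x)^s=O(n^s)$ then gives exactly $O(n^{3-3/s})$.

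The paper carries this out for general $F$ with a threshold instead of moments. With $p=C_0n^{-1/s}$, a set $S$ is heavy if $|N(S)|>p^{|S|}n$.
\begin{itemize}
\item Copies of $F$ all of whose vertex subsets are light number at most $n^{v(F)}p^{e(F)}$. This is your vertex-by-vertex embedding, but with pointwise instead of averaged bounds.
\item For a light $(i-1)$-set $T$, KST applied to the $K_{s-i+1,t}$-free bipartite graph between candidate vertices $v$ and $N(T)$ shows that only $O(n^{i/s})$ vertices $v$ make $T\cup\{v\}$ heavy.
\item Hence there are $O(n^{i-1+v(F)/s})$ minimal heavy $i$-sets, and $O(n^{v(F)-1+v(F)/s})$ copies of $F$ contain one.
\end{itemize}
The hypothesis $s\ge v(F)+e(F)$ is used exactly to make this last count $O(n^{v(F)-e(F)/s})$, not to keep H\"older exponents at most $s$.
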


    \begin{proof}

        Let $G$ be a $K_{s, t}$-free graph on $n$ vertices. Set $p = C_0n^{-\frac{1}{s}}$ for a sufficiently large constant $C_0$ that depends only on $s$ and $t$. 

        We say that a set $S\subset V(G)$ of size $i$ is \emph{heavy} if $|N_G(S)| > p^in$ and \emph{light} otherwise. (Here and below, $N_G(S)$ denotes the common neighbourhood of the set $S$ in the graph $G$. We use the convention that the common neighbourhood of the empty set is $V(G)$, so the empty set is light.) We say that a set is \emph{admissible} if it is heavy and every proper subset is light. 

        We will count the number of copies of $F$ according to whether they contain an admissible set of vertices or not. We bound those that do contain an admissible set by the following claim.

        \begin{claim} \label{claim:few admissible}
            For every $1\leq i \leq v(F)$, the number of admissible sets of size $i$ is $O(n^{i - 1 + \frac{v(F)}{s}})$. 
        \end{claim}
        \begin{proof}
            Given an admissible set $S$ of size $i$, let $T\subset S$ be an arbitrary subset of $S$ of size $i-1$. Since $S$ is admissible, $T$ is light, so $|N_G(T)|\leq p^{i-1}n$. On the other hand, since $S$ is heavy, we have $|N_G(S)|>p^i n$. The claim follows if we can show that there are at most $O(n^{\frac{v(F)}{s}})$ vertices $v\in V(G)\setminus T$ such that $|N_G(v) \cap N_G(T)| > p^i n$ as then, for a given $T$, there are at most $O(n^{\frac{v(F)}{s}})$ ways to choose the unique vertex in $S\setminus T$. Let $X$ be the set of vertices $v\in V(G)\setminus T$ such that $|N_G(v) \cap N_G(T)| > p^i n$. Let $L$ be the bipartite graph with parts $X$ and $N_G(T)$ with an edge between a pair if there is such an edge in $G$. Since $G$ is $K_{s, t}$-free, $L$ does not contain $K_{s-i+1,t}$ with the part of size $s-i+1$ in $X$. Indeed, any such copy of $K_{s-i+1,t}$ would yield a copy of $K_{s,t}$ in $G$ by adding $T$ to it.

            On the one hand, $e(L) \geq p^i n|X|$ since each $v\in X$ has degree at least $p^i n$ in $L$. On the other hand, by the K\H ov\'ari--S\'os--Tur\'an theorem \cite{KST54}, we have that $e(L) \leq C(|X||N_G(T)|^{1-1/(s-i+1)}+|N_G(T)|)$ for some constant $C$ depending only on $s$ and $t$. It follows that either $e(L)\leq 2C|X||N_G(T)|^{1-1/(s-i+1)}$ or $e(L)\leq 2C|N_G(T)|$. In the former case, we obtain $p^in\leq 2C|N_G(T)|^{1-1/(s-i+1)}\leq 2C(p^{i-1}n)^{1-1/(s-i+1)}$, where we used that $|N_G(T)|\leq p^{i-1}n$. A simple computation shows that this is not possible if $C_0$ is sufficiently large. In the latter case, we have $p^i n|X|\leq 2C|N_G(T)|\leq 2Cn$. Hence, $|X|\leq 2Cp^{-i}\leq O(n^{i/s})\leq O(n^{v(F)/s})$. 
        \end{proof}

        The following claim bounds the number of copies that do not contain an admissible set of vertices.

         \begin{claim} \label{claim:no admissible}
             The number of copies of $F$ not containing an admissible set of vertices is at most $n^{v(F)}p^{e(F)}$.
         \end{claim}

        \begin{proof}
        Note that if in a copy of $F$, no set of vertices is admissible, then every subset of its vertices is light. We bound the number of such copies as follows. Let $v_1,\dots,v_k$ be an arbitrary ordering of the vertices of $F$. We will prove by induction on $i$ that the number of ways to embed $F_i:=F[\{v_1,\dots,v_i\}]$ in $G$ such that every subset of the vertex set is light is at most $n^{v(F_i)}p^{e(F_i)}$. This then completes the proof of the claim by taking $i=k$.

        The base case ($i=0$) of the induction is clear. For the induction step, it suffices to prove that for any embedding $\varphi$ of $F_i$ into $G$ such all vertex subsets of $\varphi(V(F_i))$ are light, the number of choices for the image of $v_{i+1}$ which extend $\varphi$ to an embedding of $F_{i+1}$ is at most $p^{e(F_{i+1})-e(F_i)}n$. Let $Z=N_{F_{i+1}}(v_{i+1})$ and let $S = \varphi(Z)\subset V(G)$ be the image of $Z$. Note that $v_{i+1}$ needs to be mapped to a vertex in $N_G(S)$ in order to obtain an embedding of $F_{i+1}$. However, as $S$ is light, there are at most $p^{|S|}n$ such vertices. Noting that $|S|=|Z|=d_{F_{i+1}}(v_{i+1})=e(F_{i+1})-e(F_i)$, it follows that indeed the number of choices for the image of $v_{i+1}$ which extend $\varphi$ to an embedding of $F_{i+1}$ is at most $p^{e(F_{i+1})-e(F_i)}n$.
        \end{proof}

        By Claim \ref{claim:few admissible}, the number of copies of $F$ containing an admissible set of any size is $O(n^{v(F) -1 + \frac{v(F)}{s}})= O(n^{v(F) - \frac{e(F)}{s}})$ by our choice of $s$. 
        On the other hand, by Claim \ref{claim:no admissible}, the number of copies of $F$ not containing an admissible set is at most $n^{v(F)}p^{e(F)}=O(n^{v(F)-e(F)/s})$. Thus, $G$ contains at most $O(n^{v(F) - e(F)/s})$ copies of $F$, completing the proof. 
    \end{proof}

It is now easy to prove Theorem \ref{thm:realizable exponents}.

\begin{proof}[Proof of Theorem \ref{thm:realizable exponents}]
    By Propositions \ref{prop:realizable lower bound} and \ref{prop:realizable upper bound}, the number $v(F)-e(F)/s$ is a realizable exponent for $F$ for all $s\geq v(F) + e(F)$ (with $H=K_{s,t}$ for a sufficiently large~$t$). Since $e(F)>0$, this gives infinitely many realizable exponents.
\end{proof}

\section{Concluding remarks}

Given Theorem \ref{thm:k3t-free}, it is natural to make the following conjecture.

\begin{conjecture}
    For any $s<a\leq b$, there exists some $t_0$ such that for all $t\geq t_0$, we have
    $$\ex(n,K_{a,b},K_{s,t})=\Theta_{s,t,a,b}(n^s).$$
\end{conjecture}

The natural extension of our approach would require us to construct a set of roughly $q^s$ points $\mathcal{P}$ and a set of roughly $q^s$ one-codimensional hyperplanes $\mathcal{H}$ in $\mathbb{F}_q^{2s-1}$ such that the incidence graph is $K_{s,t}$-free but it contains $\Omega(q^{s^2})$ copies of $K_{a,b}$. When $s\geq 4$, we would like to avoid having four coplanar points in $\mathcal{P}$ as such points are likely to be contained in a copy of $K_{s,t}$. However, it is not hard to show that any point set $\mathcal{P}\subset \mathbb{F}_q^{2s-1}$ with no four coplanar elements has size at most $O(q^{s-1})$. Indeed, any such $\mathcal{P}$ would in particular have no three points on a line. Now, for any $x,y\in \mathcal{P}$, consider the line $\ell_{x,y}$ between $x$ and $y$. Since $\mathcal{P}$ does not contain four coplanar points, it follows that the sets $\ell_{x,y}\setminus \{x,y\}$ are pairwise disjoint over all pairs $x,y$. This implies that $\binom{|\mathcal{P}|}{2}(q-2)\leq q^{2s-1}$, from which $|\mathcal{P}|=O(q^{s-1})$ follows.

\medskip

\paragraph{Acknowledgements.} We are very grateful to Sam Spiro for useful discussions on the topic of this paper. We would like to also acknowledge the support of NSF through Grant DMS-2321249, for the $30$th edition of the Atlanta Lecture Series, during which this research was initiated.  

\paragraph{AI declaration.} While the authors found all the main ideas of the proofs, ChatGPT was used to work out some of the technical details.

\bibliography{ref}

\bibliographystyle{abbrv}

\end{document}